\begin{document}

\title[A two-dimensional Birkhoff's theorem]
      {A two-dimensional Birkhoff's theorem}
      
\author{Mat\v{e}j Dost\'{a}l}
\address{Department of Mathematics, Faculty of Electrical Engineering, Czech Technical University
         in Prague, Czech Republic}
\email{dostamat@math.feld.cvut.cz}

\thanks{Mat\v{e}j Dost\'{a}l acknowledges the support
        by the grant SGS14/186/OHK3/3T/13 of \v{C}VUT}

\date{31 August 2015}

\begin{abstract}
Birkhoff's variety theorem from universal algebra characterises equational subcategories of varieties. We give an analogue of Birkhoff's theorem in the setting of enrichment in categories. For a suitable notion of an equational subcategory we characterise these subcategories by their closure properties in the ambient algebraic category.
\end{abstract}

\maketitle

\section{Introduction}

In this paper we will state and give a proof of a 2-dimensional analogue of the Birkhoff theorem from universal algebra. Recall that in the ordinary setting, Birkhoff's theorem characterises equationally defined subcategories of algebraic categories. Consider a category $\Alg(T)$ of algebras for a strongly finitary monad $T$. (Note that a monad is strongly finitary if its underlying functor is strongly finitary, i.~e., if it preserves sifted colimits.) A full subcategory $\A$ of $\Alg(T)$ is said to be \emph{equationally defined} if it is (equivalent to) the category $\Alg(T^\prime)$ of algebras for a strongly finitary monad $T^\prime$, where $T^\prime$ is constructed by ``adding new equations'' to the theory $T$. More precisely, we ask $T^\prime$ to be a quotient of $T$, meaning that there is a monad morphism $e: T \to T^\prime$ that is moreover a regular epimorphism. The resulting algebraic functor
$$
\Alg(e): \Alg(T^\prime) \to \Alg(T)
$$
then exhibits $\Alg(T^\prime)$ as an equationally defined full subcategory of $\Alg(T)$.
Every such subcategory
$
\Alg(T^\prime) \to \Alg(T)
$
has nice closure properties with respect to to the inclusion into $\Alg(T)$. The content of Birkhoff's theorem is that equationally defined subcategories can moreover be characterised by these closure properties. In essence, this theorem holds since algebraic categories are well-behaved with respect to quotients (regular epis) -- they are \emph{exact categories}~\cite{adamek+rosicky+vitale}.

Taking inspiration from the ordinary case, we want to give a characterisation of equationally defined subcategories of algebraic categories in the enriched setting. Namely, we shall mainly work with categories enriched in the symmetric monoidal closed category $\V = \Cat$ and we will accordingly use the enriched notions of a functor, natural transformation, etc. 

Analogously to the ordinary case, in defining the notion of an equationally defined subcategory of $\Alg(T)$ the idea is again to consider ``quotients'' $e: T \to T^\prime$ of strongly finitary 2-monads. Any subcategory $\Alg(e): \Alg(T^\prime) \to \Alg(T)$ exhibited by a quotient $e: T \to T^\prime$ is equationally defined in $\Alg(T)$.

Unlike in the $\V = \Set$ case, it is not immediately clear that some well-behaved notion of a quotient of strongly finitary 2-monads should exist. In $\V = \Set$, the quotients come as the epi part of the (regular epi, mono) factorisation system, and they are computed as certain colimits, the coequalisers. The solution in $\V = \Cat$ is to mimic this approach. Thus we should study factorisation systems on $\Cat$ (and the respective notions of a quotient), and find out which factorisation systems ``lift up'' from the category $\Cat$ to $\Cat$-enriched algebraic categories. That is, we want to find factorisation systems that render the algebraic categories over $\Cat$ \emph{exact} in some suitably generalised sense. This would allow us to talk about quotients of strongly finitary 2-monads while preserving the good behaviour of quotients as in $\Cat$. 

Recent advances in the theory of 2-dimensional exactness (see~\cite{bourke+garner-exactness}) show that there are at least three notions of a quotient coming from three factorisation systems $(\E,\M)$ on $\Cat$, for which algebraic categories over $\Cat$ are exact:
\begin{enumerate}
\item (surjective on objects, injective on objects and fully faithful),
\item (bijective on objects, fully faithful),
\item (bijective on objects and full, faithful).
\end{enumerate}
(For the $\E$ parts of the above systems, we will use the standard abbreviations, namely s.o.~for surjective on objects, b.o.~for bijective on objects, and~b.o.~full for bijective on objects and full.)
One of the results of~\cite{bourke+garner-exactness} states that the 2-category $\Mndsf(\Cat)$ of strongly finitary 2-monads over $\Cat$ is \emph{exact} in the sense of~\cite{bourke+garner-exactness} w.r.t.\ all the three factorisation systems above.

We focus on the factorisation system (b.o.~full, faithful). Unlike the other two systems, it corresponds to a meaningful notion of an equationally defined subcategory, and it allows to prove the 2-dimensional Birkhoff theorem by arguments very similar to those contained in the proof of the ordinary Birkhoff theorem. For this factorisation system, the exactness of $\Mndsf(\Cat)$ implies that a monad morphism $e: T \to T^\prime$ is a quotient if and only if $e_C: TC \to T^\prime C$ is a b.o.~full functor in $\Cat$ for every category $C$. We shall often use this ``pointwise'' nature of quotient monad morphisms.

The main result of the paper characterises ``equational subcategories'' of algebraic categories as those that are closed under products, quotients, subalgebras and sifted colimits. This is a characterisation precisely in the spirit of the ordinary Birkhoff theorem. In the ordinary setting, only the first three closure properties are demanded, and are dubbed ``HSP'' conditions. However, even in the ordinary case, it was found out that closure under \emph{filtered} colimits was necessary when dealing with the many-sorted case~\cite{arv-many-sorted-hsp}. It is thus not surprising that the additional requirement of closure under \emph{sifted} colimits is needed in the 2-dimensional case: the \emph{finitary} and \emph{strongly finitary} 2-monads no longer coincide in $\Cat$, and we are dealing with the strongly finitary ones. The choice of working with strongly finitary 2-monads is fairly natural, since the 2-category $\Mndsf(\Cat)$ is equivalent to the 2-category $\Law$ of $\Cat$-enriched one-sorted algebraic theories (also dubbed \emph{Lawvere 2-theories})~\cite{lack+rosicky}.

In the final section we conclude with a few remarks on possible future work and on the other two factorisation systems on $\Cat$. These systems are much worse behaved, and thus the corresponding Birkhoff-type theorem would be of a weaker nature.

\section{Factorisations, kernels and quotients in 2-categories}

We shall make heavy use of factorisation systems in discussing and proving the Birkhoff theorem. The study of factorisation systems in general 2-categories is much more involved than in the ordinary case. Following the exposition in~\cite{bourke+garner-exactness}, we first recall the definitions of \emph{enriched} orthogonality in a general $\V$-category for a symmetric monoidal closed base category $\V$. Then we introduce \emph{kernel-quotient systems} that generalise the correspondence between regular epimorphisms and kernels in exact categories, and we use this notion to introduce three well-behaved factorisation systems on $\Cat$.

\begin{definition}
\label{def:ortho}
A morphism $f: A \to B$ in a $\V$-category $\C$ is $\V$-orthogonal to $g: C \to D$ if the diagram
$$
\begin{tikzcd}
\C(B,C) \rar{\C(B,g)} \dar[swap]{\C(f,C)} & \C(B,D) \dar{\C(f,D)} \\
\C(A,C) \rar[swap]{\C(A,g)} & \C(A,D)
\end{tikzcd}
$$
is a pullback in $\V$. Given an object $C$ of $\C$, the morphism $f: A \to B$ is orthogonal to $C$ if $f$ is orthogonal to $\id_C$, i.~e., if the precomposition map
$$
\C(f,C): \C(B,C) \to \C(A,C)
$$
is invertible (i.e., an isomorphism). We denote this fact by $f \perp C$.
\end{definition}

\begin{example}
We examine when two morphisms $f: A \to B$ and $g: C \to D$ are orthogonal in $\C$ for the case of $\V = \Cat$. Firstly, the morphisms have to satisfy the usual diagonal fill-in property
$$
\begin{tikzcd}
A \rar{f} \dar[swap]{x} & B \dar{y} \dlar[swap, dotted]{\exists! d} \\
C \rar[swap]{g} & D
\end{tikzcd}
$$
for every pair $x: A \to C$ and $y: B \to D$ of morphisms in $\C$. Let us denote by $d: A \to D$ the diagonal fill-in for $x$ and $y$, and denote by $d': A \to D$ the diagonal fill-in for $x'$ and $y'$. The second requirement on $f$ and $g$ to be orthogonal is that they satisfy the diagonal \emph{2-cell} property: for every pair $\alpha: x \Rightarrow x'$ and $\beta: y \Rightarrow y'$ of 2-cells such that
$$
\begin{tikzpicture}
    \node (X) at (0,1) {$A$};
    \node (V) at (0,-1) {$C$};
    \node (Y) at (2,-1) {$D$};
    \node (L) at (0,0) {$\Rightarrow$};
%    \node[rotate=-90] (R) at (0.15,0) {$\Rightarrow$};
    \node (alpha) at (0,0.3) {$\alpha$};
%    \node (beta) at (0.4,0) {$\beta$};
    \draw[->] (X) edge [bend left=30] node [right] {$x'$} (V);
    \draw[->] (X) edge [bend right=30] node [left] {$x\phantom{'}$} (V);
    \draw[->] (V) edge node [below] {$g$} (Y);

\node (eq) at (3,0) {$=$};

\begin{scope}[shift={(6,0)}]
    \node (X) at (0,1) {$B$};
    \node (V) at (0,-1) {$D$};
    \node (Y) at (-2,1) {$A$};
    \node (L) at (0,0) {$\Rightarrow$};
%    \node[rotate=-90] (R) at (0.15,0) {$\Rightarrow$};
    \node (alpha) at (0,0.3) {$\beta$};
%    \node (beta) at (0.4,0) {$\beta$};
    \draw[->] (X) edge [bend left=30] node [right] {$y'$} (V);
    \draw[->] (X) edge [bend right=30] node [left] {$y\phantom{'}$} (V);
    \draw[->] (Y) edge node [above] {$f$} (X);
\end{scope}
\end{tikzpicture}
$$
there has to exist a \emph{unique} 2-cell $\delta: d \Rightarrow d'$ such that the equalities
$$
\begin{tikzpicture}
    \node (X) at (0,1) {$A$};
    \node (V) at (0,-1) {$C$};
%    \node (Y) at (2,-1) {$D$};
    \node (L) at (0,0) {$\Rightarrow$};
%    \node[rotate=-90] (R) at (0.15,0) {$\Rightarrow$};
    \node (alpha) at (0,0.3) {$\alpha$};
%    \node (beta) at (0.4,0) {$\beta$};
    \draw[->] (X) edge [bend left=30] node [right] {$x'$} (V);
    \draw[->] (X) edge [bend right=30] node [left] {$x\phantom{'}$} (V);
%    \draw[->] (V) edge node [below] {$g$} (Y);

\node (eq) at (1.5,0) {$=$};

\begin{scope}[shift={(4,0)}]
    \node (X) at (0,1) {$B$};
    \node (V) at (-2,-1) {$C$};
    \node (Y) at (-2,1) {$A$};
    \node (L) at (-1,0) {$\Rightarrow$};
%    \node[rotate=-90] (R) at (0.15,0) {$\Rightarrow$};
    \node (alpha) at (-1,0.3) {$\delta$};
%    \node (beta) at (0.4,0) {$\beta$};
    \draw[->] (X) edge [bend left=30] node [right] {$d'$} (V);
    \draw[->] (X) edge [bend right=30] node [left] {$d\phantom{'}$} (V);
    \draw[->] (Y) edge node [above] {$f$} (X);
\end{scope}
\end{tikzpicture}
\qquad \qquad
\begin{tikzpicture}
    \node (X) at (0,1) {$B$};
    \node (V) at (0,-1) {$D$};
%    \node (Y) at (2,-1) {$D$};
    \node (L) at (0,0) {$\Rightarrow$};
%    \node[rotate=-90] (R) at (0.15,0) {$\Rightarrow$};
    \node (alpha) at (0,0.3) {$\beta$};
%    \node (beta) at (0.4,0) {$\beta$};
    \draw[->] (X) edge [bend left=30] node [right] {$y'$} (V);
    \draw[->] (X) edge [bend right=30] node [left] {$y\phantom{'}$} (V);
%    \draw[->] (V) edge node [below] {$g$} (Y);

\node (eq) at (1.5,0) {$=$};

\begin{scope}[shift={(4,0)}]
    \node (X) at (0,1) {$B$};
    \node (V) at (-2,-1) {$C$};
    \node (Y) at (0,-1) {$D$};
    \node (L) at (-1,0) {$\Rightarrow$};
%    \node[rotate=-90] (R) at (0.15,0) {$\Rightarrow$};
    \node (alpha) at (-1,0.3) {$\delta$};
%    \node (beta) at (0.4,0) {$\beta$};
    \draw[->] (X) edge [bend left=30] node [right] {$d'$} (V);
    \draw[->] (X) edge [bend right=30] node [left] {$d\phantom{'}$} (V);
    \draw[->] (V) edge node [below] {$g$} (Y);
\end{scope}
\end{tikzpicture}
$$
%$$
%\delta * f = \alpha, \qquad g * \delta = \beta
%$$
hold.

Similarly, a morphism $f: A \to B$ in $\C$ is orthogonal to an object $C$ of $\C$ if
\begin{enumerate}
\item for every $g: A \to C$ there exists a unique morphism $h: B \to C$ such that $h \cdot f = g$, and
\item for every 2-cell $\alpha: g \Rightarrow g'$ there exists a unique 2-cell $\beta: h \Rightarrow h'$ such that $\beta * f = \alpha$ holds.
\end{enumerate} 
\end{example}

We now introduce the notion of a \emph{kernel-quotient system}. This notion generalises the notions of a kernel pair and its induced quotient, and allows treating factorisation systems in enriched categories parametric in the choice of the shape of ``kernel data''. Importantly, this approach covers the motivating ordinary (regular epi, mono) factorisation system on $\Set$ as well as the three factorisation systems on $\Cat$ that are mentioned in the introduction and discussed in detail in Example~\ref{ex:kernel-quotient-systems}.

In the following, we will restrict ourselves to $\V$ being a locally finitely presentable category that is closed as a monoidal category in the sense of~\cite{kelly:structures}, as we will need to impose a finitarity condition on the kernel-quotient system.

Let us denote by $\Two$ the free $\V$-category on an arrow $1 \to 0$. We let $\F$ be a finitely presentable $\V$-category containing $\Two$ as a full subcategory. Then there is the obvious inclusion $J: \Two \to \F$ and the inclusion $I: \K \to \F$ of the full subcategory of $\F$ spanned by all objects of $\F$ except $0$. We call the data $(J,I)$ a \emph{kernel-quotient system}. Given a complete and cocomplete $\V$-category $\C$, there is a chain of adjunctions as in the following diagram:
$$
\begin{tikzpicture}
	\node (2C) at (-2,0) {$[\Two,\C]$};
	\node (FC) at (0,0) {$[\F,\C]$};
	\node (KC) at (2,0) {$[\K,\C]$};

	\node[rotate=-90] (adj) at (-1,0) {$\dashv$};
	\node[rotate=-90] (adj) at (1,0) {$\dashv$};

	\draw[->]
	(2C)
	edge [bend right]
	node [below] {$\Ran{J}{}$}
	(FC);
	
	\draw[->]
	(FC)
	edge [bend right]
	node [below] {$[I,\C]$}
	(KC);
	
	\draw[->]
	(KC)
	edge [bend right]
	node [above] {$\Lan{I}{}$}
	(FC);
	
	\draw[->]
	(FC)
	edge [bend right]
	node [above] {$[J,\C]$}
	(2C); 
\end{tikzpicture}
$$
We denote the composite adjunction by
$$
\begin{tikzpicture}
	\node (2C) at (-1,0) {$[\Two,\C]$};
	\node (KC) at (1,0) {$[\K,\C]$};

	\node[rotate=-90] (adj) at (0,0) {$\dashv$};

	\draw[->]
	(2C)
	edge [bend right]
	node [below] {$K$}
	(KC);
		
	\draw[->]
	(KC)
	edge [bend right]
	node [above] {$Q$}
	(2C);
\end{tikzpicture}
$$
and call it the \emph{kernel-quotient} adjunction for $\F$.

\begin{remark}
In~\cite{bourke+garner-exactness} the authors give a weaker definition of kernel-quotient adjunction to capture the cases where $\C$ is not complete and cocomplete. We do not need to introduce this weaker notion, as the 2-categories $\C$ in our examples always satisfy the completeness conditions.
\end{remark}

\begin{definition}
Given a $\V$-category $\C$ that admits the kernel-quotient adjunction for $\F$, we say that an object $X$ in $[\K,\C]$ is an \emph{$\F$-kernel} if it is in the essential image of $K$. Any arrow $f: A \to B$ in $\C$ is called an \emph{$\F$-quotient map} if it is in the essential image of $Q$, and it is called \emph{$\F$-monic} if the morphism $K(\id_A,f): K(\id_A) \to K(f)$ is an isomorphism.
\end{definition}

\begin{example}
The motivating example of a kernel-quotient adjunction in the ordinary setting ($\V = \Set$) is given by taking  the category $\F$ to be of the shape
$$
\begin{tikzcd}
2 \rar[bend left] \rar[bend right] & 1 \rar & 0.
\end{tikzcd}
$$
Here the adjunction $Q \dashv K$ acts as follows. The functor $Q$ sends a parallel pair $X = (f,g)$ to a coequaliser $QX$ of the parallel pair $(f,g)$. A morphism $f: A \to B$ in $\C$, thus an object in $[\Two,\C]$, is sent by $K$ to the kernel pair $Kf = (k_1, k_2)$ of $f$. The $\F$-monic morphisms are precisely the monomorphisms in this example.
\end{example}

The kernel-quotient system in the previous example allows factoring every morphism in $\C$ as a regular epimorphism followed by a (not necessarily monomorphic) morphism. Since both the functors $I: \K \to \F$ and $J: \Two \to \F$ are injective on objects and fully faithful, the functors $\Ran{J}{}$ and $\Lan{I}{}$ can \emph{always} be taken as strict sections of the functors $[J,\C]$ and $[I,\C]$, respectively. Then the kernel-quotient adjunction $Q \dashv K$ may be taken to commute with the evaluation functors $[\Two,\C] \to \C$ and $[\K,\C] \to \C$ that evaluate at the object $1$. This results in the counit $\eps$ of $Q \dashv K$ having the following form for all objects $f$ in $[\Two,\C]$:
$$
\begin{tikzcd}
A \rar{\id_A} \dar[swap]{QKf} & A \dar{f} \\
C \rar[swap]{\eps_f} & B
\end{tikzcd}
$$
Thus we have a factorisation
$$
f = \eps_F \cdot QKf,
$$
and $QKf$ is a regular epi, being a coequaliser of the parallel pair $Kf$. If the morphism $\eps_f$ is a mono for every $f$, we obtain a factorisation system (regular epi, mono) on $\C$. More generally, we say that \emph{$\F$-kernel-quotient factorisations in $\C$ converge immediately} whenever $\eps_f$ is always $\F$-monic. Whenever $\F$-kernel-quotient factorisations converge immediately in $\C$, we obtain a factorisation system ($\F$-quotient, $\F$-monic) on $\C$ (by Proposition~4 of~\cite{bourke+garner-exactness}).

\begin{remark}
The $\F$-quotient and $\F$-monic maps in $\C$ satisfy many expected properties:
\begin{enumerate}
\item both classes are closed under composition and identities,
\item the $\F$-quotients are closed under pushouts and under colimits in $[\Two,\C]$,
\item the $\F$-monic maps are closed under pullbacks and under limits in $[\Two,\C]$,
\item any finitely-continuous $\V$-functor preserves $\F$-monics,
\item any left adjoint $\V$-functor preserves $\F$-quotients, and
\item any $\F$-quotient $f: A \to B$ is $\V$-orthogonal to any $\F$-monic $g: C \to D$.
\end{enumerate}
These facts are proved in~Proposition~2 of~\cite{bourke+garner-exactness}.
\end{remark}

\begin{example}
\label{ex:kernel-quotient-systems}
In this example we present various kernel-quotient systems in the case of enrichment in $\Cat$. This example is essentialy the contents of Section~5.1 of~\cite{bourke+garner-exactness}.
\begin{enumerate}
\item Given a 2-category $\F_\bof$ generated by
$$
\begin{tikzpicture}
    \node (E) at (-1,0) {$2$};
    \node (V) at (1,0) {$1$};
    \node (A) at (3,0) {$0$};
    \node[rotate=-90] (L) at (-0.15,0) {$\Rightarrow$};
    \node[rotate=-90] (R) at (0.15,0) {$\Rightarrow$};
    \node (alpha) at (-0.4,0) {$\alpha$};
    \node (beta) at (0.4,0) {$\beta$};
    \draw[->] (E) edge [bend left=30] node [above] {$s$} (V);
    \draw[->] (E) edge [bend right=30] node [below] {$t$} (V);
    \draw[->] (V) edge node [above] {$w$} (A);
\end{tikzpicture}
$$
subject to the identity $w * \alpha = w * \beta$, we obtain the following kernel-quotient system. Given kernel-data $X$ in $[\K,\C]$, its $\F_\bof$-quotient $QX$ is its \emph{coequifier}. When $\C = \Cat$, kernel-quotient factorisations for $\F_\bof$ converge immediately, and they give rise to the (b.o.~full, faithful) factorisation system on $\Cat$.

\item Let $\F_\bo$ be the 2-category generated by
$$
\begin{tikzcd}
3 \rar[bend left=40]{p} \rar[bend right=40, swap]{q} \rar{m} & 2 \rar[bend left=40]{d} \rar[bend right=40, swap]{c}  & 1 \rar{w} \lar[swap]{i} & 0.
\end{tikzcd}
$$
together with a 2-cell $\theta: wd \Rightarrow wc$, modulo the identities
$$
d \cdot i = c \cdot i = \id_1, \qquad c \cdot p = d \cdot q, \qquad d \cdot m = d \cdot p, \qquad c \cdot m = c \cdot q,
$$
on 1-cells and modulo the 2-cell identities
$$
\theta * i = \id_w, \qquad (\theta * q) \cdot (\theta * p) = \theta * m.
$$
Given some kernel-data $X$ in $[\K,\C]$,  the $\F_\bo$-quotient of $X$ is the \emph{universal codescent cocone} under $X$. When $\C = \Cat$, kernel-quotient factorisations for $\F_\bo$ again converge immediately, and give rise to the (b.o., f.f.) factorisation system on $\Cat$. 

\item The (s.o., i.o.f.f.) factorisation system on $\Cat$ is given by the kernel-quotient system $\F_\so$
$$
\begin{tikzcd}
& 2' \dar{j} & & \\
3 \rar[bend left=40]{p} \rar[bend right=40, swap]{q} \rar{m} & 2 \rar[bend left=40]{d} \rar[bend right=40, swap]{c}  & 1 \rar{w} \lar[swap]{i} & 0
\end{tikzcd}
$$
obtained from the system $\F_\bo$ in (2) by adjoining a new morphism $2' \xrightarrow{j} 2$ subject to equalities
$$
w \cdot d \cdot j = w \cdot c \cdot j, \qquad \theta * j = \id_{wdj}.
$$
An $\F_\so$-quotient of kernel-data $X$ is a codescent cocone universal amongst those cocones for which $\theta * Xj$ is the identity 2-cell.
\end{enumerate}
\end{example}

\begin{remark}
The main focus of~\cite{bourke+garner-exactness} is to study the generalised notions of regularity and exactness, parametric in the choice of a kernel-quotient system $\F$. This yields a theory of $\F$-regularity and $\F$-exactness. We do not need to introduce the theory of $\F$-exactness in detail. In fact, we use the results of~\cite{bourke+garner-exactness} only to ``lift'' the well-behaved factorisation systems of~Example~\ref{ex:kernel-quotient-systems} on $\Cat$ to any algebraic category $\Alg(T)$ for a strongly finitary 2-monad $T$ on $\Cat$.

For our purposes, it is enough to recall the following facts:
\begin{enumerate}
\item
The category $\Cat$ is $\F$-exact (and therefore $\F$-regular) for all the three kernel-quotient systems introduced in~\ref{ex:kernel-quotient-systems}.
\item
Proposition~16 of~\cite{bourke+garner-exactness} states that if kernel-quotient factorisations for $\F$ converge immediately in $\V$, then they converge in every $\F$-exact $\V$-category.
That is, every $\F$-exact $\V$-category then admits an ($\F$-quotient,$\F$-monic) factorisation system.
\item
Let $\F$-quotients be closed under finite products in $\Cat$ for a kernel-quotient system $\F$. Given any small 2-category $\A$ with finite products, the 2-category $\FP[\A,\C]$ of finite-product-preserving functors is $\F$-exact ($\F$-regular), whenever $\C$ is. This follows from Propositions~18 and~56 of~\cite{bourke+garner-exactness}.
\item
In $\Cat$, given any of the kernel-quotient systems introduced in~Example~\ref{ex:kernel-quotient-systems}, the $\F$-quotients are closed under finite products. Consider any $\F$-exact 2-category $\C$ for such a kernel-quotient system $\F$. By Remark~59 of~\cite{bourke+garner-exactness} we get that for any strongly finitary 2-monad $T$ on $\C$, the 2-category $\Alg(T)$ of (strict) $T$-algebras is also $\F$-exact.
\end{enumerate}
\end{remark}

\begin{remark}
\label{rem:sf-monads-are-monadic}
Denote by $N$ the discrete 2-category with natural numbers as objects. An argument from~\cite{lack:monadicity} shows that there is a chain
$$
\begin{tikzpicture}
	\node (mnd) at (0,1.5) {$\Mndsf(\Cat)$};
	\node (catsf) at (0,0) {$[\Catsf,\Cat]$};
	\node (ncat) at (0,-1.5) {$[N,\Cat]$};
	\node (adj1) at (0,0.75) {$\dashv$};
	\node (adj2) at (0,-0.75) {$\dashv$};
		
    \draw[->] (mnd) edge [bend left=30] node [right] {$W$} (catsf);
    \draw[->] (catsf) edge [bend left=30] node [left] {$H$} (mnd);
    \draw[->] (catsf) edge [bend left=30] node [right] {$V$} (ncat);
    \draw[->] (ncat) edge [bend left=30] node [left] {$G$} (catsf);	
\end{tikzpicture}
$$
of adjunctions, with the composite adjunction
$$
\begin{tikzpicture}
	\node (mnd) at (0,1.5) {$\Mndsf(\Cat)$};
%	\node (catsf) at (0,0) {$(\Catsf,\Cat)$};
	\node (ncat) at (0,0) {$[N,\Cat]$};
	\node (adj1) at (0,0.75) {$\dashv$};
	%\node (adj2) at (0,-0.75) {$\dashv$};
		
    \draw[->] (mnd) edge [bend left=30] node [right] {$U$} (ncat);
    \draw[->] (ncat) edge [bend left=30] node [left] {$F$} (mnd);
    %\draw[->] (catsf) edge [bend left=30] node [right] {$V$} (ncat);
    %\draw[->] (ncat) edge [bend left=30] node [left] {$G$} (catsf);	
\end{tikzpicture}
$$
being monadic. Thus $\Mndsf(\Cat)$ is equivalent to the 2-category $[N,\Cat]^M$ of Eilenberg-Moore algebras for the 2-monad $M = UF$. The right adjoint $U$ preserves sifted colimits, and therefore $M$ is strongly finitary. The category $[N,\Cat]^M$ is a locally finitely presentable category (in the 2-dimensional sense of~\cite{kelly:structures}), and so it is complete and cocomplete. We can conclude that $\Mndsf(\Cat)$ admits all the three factorisation systems of Example~\ref{ex:kernel-quotient-systems}, being $\F$-exact for the respective kernel-quotient systems.
\end{remark}

\section{The (b.o.~full, faithful) factorisation system}

The present section is devoted to the study of the very special (b.o.~full, faithful) factorisation system on $\Cat$ and on other $\F_\bof$-exact categories. Most importantly, we show first that the $\F_\bof$-quotients (b.o.~full functors in the case of the 2-category $\Cat$) enjoy many convenient properties. These properties will allow us to prove a $\Cat$-enriched Birkhoff theorem in the following section, with the proof being very much in the spirit of the proof for ordinary Birkhoff theorem. We shall also see that quotients of monads correspond to ``equationally defined'' subcategories of algebraic categories. As a slight detour we shall prove a 2-dimensional adjoint functor theorem that will be applied in the proof of Birkhoff theorem. Again, the applicability of the adjoint functor theorem is special to the (b.o.~full, faithful) factorisation system due to its nice behaviour.

Given a b.o.~full $h: C \to A$ in $\Cat$, the functor
$$
\Cat(h,B): \Cat(A,B) \to \Cat(C,B)
$$
is injective on objects and fully faithful for every $B$. The injectivity on objects of $\Cat(h,B)$ correspods to  $h$ being an epimorphism in $\Cat$, faithfulness of $\Cat(h,B)$ states that $h$ is an epimorphism with respect to 2-cells, and fullness of $\Cat(h,B)$ corresponds to a factorisation property of $h$ w.r.t.\ 2-cells. We prove this easy fact in the following lemma.

\begin{lemma}
\label{lem:cancel-2-cells}
Consider the following diagram
$$
\begin{tikzcd}
{} & A \arrow{dr}{f} \arrow[Rightarrow, short]{dd}{\beta} & {} \\
C \urar{h} \drar[swap]{h} & {} & B \\
{} & A \urar[swap]{g} & {}
\end{tikzcd}
$$
in $\Cat$ and let $h$ be bijective on objects and full. Then there is a unique natural transformation 
$$
\begin{tikzpicture}
    \node (E) at (-1,0) {$A$};
    \node (V) at (1,0) {$B$};
    \node[rotate=-90] (L) at (0,0) {$\Rightarrow$};
%    \node[rotate=-90] (R) at (0.15,0) {$\Rightarrow$};
    \node (alpha) at (0.3,0) {$\alpha$};
%    \node (beta) at (0.4,0) {$\beta$};
    \draw[->] (E) edge [bend left=30] node [above] {$f$} (V);
    \draw[->] (E) edge [bend right=30] node [below] {$g$} (V);
\end{tikzpicture}
$$
such that $\beta = \alpha * h$ holds.
\end{lemma}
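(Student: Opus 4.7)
The plan is to exploit the two defining properties of $h$ in tandem: bijectivity on objects will let me define $\alpha$ pointwise from $\beta$, and fullness will supply naturality. Uniqueness of $\alpha$ will then follow immediately from surjectivity on objects.

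First I would construct $\alpha$ on objects. Given $a \in A$, let $c$ be the unique object of $C$ with $hc = a$, and set $\alpha_a := \beta_c : fa \to ga$. To promote this assignment to a natural transformation $\alpha : f \Rightarrow g$, I need to verify that for every morphism $u : a \to a'$ in $A$ the square $gu \cdot \alpha_a = \alpha_{a'} \cdot fu$ commutes. By fullness of $h$, pick any $v : c \to c'$ in $C$ with $hv = u$; the naturality square of $\beta : fh \Rightarrow gh$ at $v$ is exactly the desired equation. The small subtlety to check here is that $v$ need not be unique (since $h$ is not assumed faithful), but the square to be verified depends only on $u = hv$, not on the chosen $v$, so any lift works.

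The equation $\alpha * h = \beta$ then holds by construction, since $(\alpha * h)_c = \alpha_{hc} = \beta_c$ for every $c \in C$. For uniqueness, suppose $\alpha' : f \Rightarrow g$ also satisfies $\alpha' * h = \beta$; then $\alpha'_{hc} = (\alpha' * h)_c = \beta_c = \alpha_{hc}$ for every $c$, and since $h$ is surjective on objects this forces $\alpha' = \alpha$. The main (and really rather minor) obstacle is the independence-from-lift check noted above; everything else is a direct unpacking of definitions of whiskering and of the properties of $h$.
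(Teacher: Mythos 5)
Your proof is correct, but it takes a genuinely different route from the paper's. You argue entirely elementarily and pointwise inside $\Cat$: bijectivity on objects defines $\alpha_a$ as $\beta_c$ for the unique $c$ with $hc=a$, fullness supplies a lift $v$ of each $u:a\to a'$ so that the naturality square for $\beta$ at $v$ \emph{is} the required square for $\alpha$ at $u$ (and, as you note, only the existence of one lift matters, since the square to be checked is phrased in terms of $u$ alone), and surjectivity on objects gives uniqueness. The paper instead invokes the machinery of the preceding section: since $\Cat$ is $\F_\bof$-exact, $h$ is the coequifier of its kernel-quotient pair, both $f\cdot h$ and $g\cdot h$ coequify that pair (because $\beta$ exists between them, whiskered composites with the kernel 2-cells agree), and the 2-dimensional universal property of the coequifier produces the unique $\alpha$ with $\alpha * h = \beta$. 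Your argument is more concrete and self-contained, requiring nothing beyond the definitions of whiskering and of b.o.~full functors; the paper's argument is less hands-on but transports verbatim to any $\F_\bof$-exact 2-category, which is in keeping with how the lemma is later used alongside the exactness results of Bourke and Garner. For the lemma as stated (in $\Cat$) your proof is entirely adequate.
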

\begin{proof}
Denote by
$$
\begin{tikzpicture}
    \node (E) at (-1,0) {$K$};
    \node (V) at (1,0) {$C$};
    \node (A) at (3,0) {$A$};
    \node[rotate=-90] (L) at (-0.15,0) {$\Rightarrow$};
    \node[rotate=-90] (R) at (0.15,0) {$\Rightarrow$};
    \node (alpha) at (-0.4,0) {$\gamma$};
    \node (beta) at (0.4,0) {$\delta$};
    \draw[->] (E) edge [bend left=30] node [above] {$s$} (V);
    \draw[->] (E) edge [bend right=30] node [below] {$t$} (V);
    \draw[->] (V) edge node [above] {$h$} (A);
\end{tikzpicture}
$$
the kernel-quotient pair of $h$. Therefore, since $\Cat$ is $\F_\bof$-exact, $h$ is a coequifier of the diagram
$$
\begin{tikzpicture}
    \node (E) at (-1,0) {$K$};
    \node (V) at (1,0) {$C$};
    \node[rotate=-90] (L) at (-0.15,0) {$\Rightarrow$};
    \node[rotate=-90] (R) at (0.15,0) {$\Rightarrow$};
    \node (alpha) at (-0.4,0) {$\gamma$};
    \node (beta) at (0.4,0) {$\delta$};
    \draw[->] (E) edge [bend left=30] node [above] {$s$} (V);
    \draw[->] (E) edge [bend right=30] node [below] {$t$} (V);
\end{tikzpicture}
$$
Then both $f \cdot h$ and $g \cdot h$ also coequify the above diagram. By the 2-dimensional property of coequifiers we have that the equality
$$
\begin{tikzpicture}
    \node (X) at (0,1) {$C$};
    \node (V) at (0,-1) {$B$};
%    \node (Y) at (2,-1) {$D$};
    \node (L) at (0,0) {$\Rightarrow$};
%    \node[rotate=-90] (R) at (0.15,0) {$\Rightarrow$};
    \node (alpha) at (0,0.3) {$\beta$};
%    \node (beta) at (0.4,0) {$\beta$};
    \draw[->] (X) edge [bend left=30] node [right] {$gh$} (V);
    \draw[->] (X) edge [bend right=30] node [left] {$fh$} (V);
%    \draw[->] (V) edge node [below] {$g$} (Y);

\node (eq) at (3,0) {$=$};

\begin{scope}[shift={(6,0)}]
    \node (X) at (0,1) {$A$};
    \node (V) at (0,-1) {$B$};
    \node (Y) at (-2,1) {$C$};
    \node (L) at (0,0) {$\Rightarrow$};
%    \node[rotate=-90] (R) at (0.15,0) {$\Rightarrow$};
    \node (alpha) at (0,0.3) {$\alpha$};
%    \node (beta) at (0.4,0) {$\beta$};
    \draw[->] (X) edge [bend left=30] node [right] {$g$} (V);
    \draw[->] (X) edge [bend right=30] node [left] {$f$} (V);
    \draw[->] (Y) edge node [above] {$h$} (X);
\end{scope}
\end{tikzpicture}
$$
holds for a unique 2-cell $\alpha$. From this it immediately follows that $\Cat(h,B)$ is fully faithful.
\end{proof}

Incidentally, the above lemma follows directly from Proposition~3.6 of~\cite{bashir-velebil}, where the authors study properties of \emph{connected} functors. These are exactly the functors $h$ for which the precomposition functor $\Cat(h,B)$ is fully faithful for every $B$.

In fact, it is sufficient that $h$ be surjective on objects for the functor $\Cat(h,B)$ to be faithful for every category $B$.

\begin{lemma}
Let
$$
\begin{tikzpicture}
    \node (E) at (-1,0) {$A$};
    \node (V) at (1,0) {$B$};
    \node[rotate=-90] (L) at (-0.15,0) {$\Rightarrow$};
    \node[rotate=-90] (R) at (0.15,0) {$\Rightarrow$};
    \node (alpha) at (-0.4,0) {$\alpha$};
    \node (beta) at (0.4,0) {$\beta$};
    \draw[->] (E) edge [bend left=30] node [above] {$s$} (V);
    \draw[->] (E) edge [bend right=30] node [below] {$t$} (V);
\end{tikzpicture}
$$
be a diagram in $\Cat$. Given a functor $h: C \to A$ that is surjective on objects, the equality $\alpha = \beta$ holds if and only if $\alpha * h = \beta * h$ holds.
\end{lemma}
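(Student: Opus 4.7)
The forward direction is trivial: if $\alpha = \beta$ then whiskering on the right with $h$ produces equal 2-cells. The real content is the converse.

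The plan is to argue componentwise. A natural transformation between functors $A \to B$ is uniquely determined by the family of its components $(\alpha_a: sa \to ta)_{a \in A}$ indexed by the objects of $A$. For any object $c$ of $C$, the component of the whiskering $\alpha * h$ at $c$ is simply $(\alpha * h)_c = \alpha_{h(c)}$. So assuming $\alpha * h = \beta * h$, we get the equality of components $\alpha_{h(c)} = \beta_{h(c)}$ for every object $c$ of $C$.

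Since $h$ is surjective on objects, every object $a$ of $A$ can be written as $a = h(c)$ for some $c$ in $C$. Then $\alpha_a = \alpha_{h(c)} = \beta_{h(c)} = \beta_a$, so $\alpha$ and $\beta$ agree on all components and are therefore equal.

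There is essentially no obstacle here; the statement is a direct unpacking of the fact that surjectivity on objects of $h$ makes the assignment $\alpha \mapsto \alpha * h$ injective on components. In particular this shows that any surjective-on-objects $h$ has the property that $\Cat(h,B)$ is faithful, which is the weaker half of what was used in Lemma~\ref{lem:cancel-2-cells}; fullness of $\Cat(h,B)$ genuinely requires the additional assumption that $h$ be full.
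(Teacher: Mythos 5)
Your proof is correct and is essentially identical to the paper's own argument: both compute the component of $\alpha * h$ at $c$ as $\alpha_{hc}$ and use surjectivity on objects to conclude that all components of $\alpha$ and $\beta$ agree. The closing remark relating this to faithfulness of $\Cat(h,B)$ matches the paper's surrounding discussion as well.
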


\begin{proof}
The direction $(\Rightarrow)$ is immediate. For the direction $(\Leftarrow)$, consider the components
$$
\alpha_a: fa \to ga, \qquad \beta_a : fa \to ga
$$
of $\alpha$ and $\beta$ for a fixed $a$ in $A$. Since $h$ is surjective on objects, we have some $c$ in $C$ with $hc = a$, and so
$$
\alpha_a = \alpha_{hc} = (\alpha * h)_c = (\beta * h)_c = \beta_{hc} = \beta_a,
$$
which was to be proved.
\end{proof}

We will show that a quotient $e: T \twoheadrightarrow T^\prime$ in $\Mndsf(\Cat)$ is b.o.~full pointwise. That is, the functor $e_n: Tn \twoheadrightarrow T^\prime n$ is b.o.~full for every finite discrete category $n$. For this observation we will need the following crucial lemma, stating that coequifiers can be ``made reflexive''.

\begin{lemma}
\label{lem:coeq-refl}
Let $\K$ be a 2-category with finite 2-coproducts. Then, if $e: B \to C$ is a coequifier of the diagram
$$
\begin{tikzpicture}
    \node (E) at (-1,0) {$A$};
    \node (V) at (1,0) {$B$};
    \node[rotate=-90] (L) at (-0.15,0) {$\Rightarrow$};
    \node[rotate=-90] (R) at (0.15,0) {$\Rightarrow$};
    \node (alpha) at (-0.4,0) {$\alpha$};
    \node (beta) at (0.4,0) {$\beta$};
    \draw[->] (E) edge [bend left=30] node [above] {$s$} (V);
    \draw[->] (E) edge [bend right=30] node [below] {$t$} (V);
\end{tikzpicture}
$$
it is also a coequifier of the reflexive diagram
$$
\begin{tikzpicture}
	\node (temp1) at (0,1) {};
	\node (temp2) at (0,-1) {};
    \node (E) at (-1.7,0) {$A + B$};
    \node (V) at (1.7,0) {$B$};
    \node[rotate=-90] (L) at (-0.15,0.3) {$\Rightarrow$};
    \node[rotate=-90] (R) at (0.15,0.3) {$\Rightarrow$};
    \node (alpha) at (-0.6,0.3) {$[\alpha,1]$};
    \node (beta) at (0.6,0.3) {$[\beta,1]$};
    
%    \draw[transform canvas={yshift=1.6ex},->] (X) to node [above] {$h$} (Y);    
    \draw[transform canvas={yshift=4ex},->] (E) edge node [above] {$[s,\id_B]$} (V);
    \draw[->] (E) edge  node [below] {$[t,\id_B]$} (V);
    \draw[transform canvas={yshift=-4ex},->] (V) edge node [below] {$i_B$} (E);
\end{tikzpicture}
$$
where $i_B: B \to A + B$ is the injection of $B$ into $A + B$.
\end{lemma}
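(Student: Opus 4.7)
The plan is to show that the coequifier universal property of $e$ with respect to the augmented reflexive diagram reduces entirely to its universal property with respect to the original pair. The extra data added on the $B$-summand consists only of the identity 1-cell $\id_B$ and the identity 2-cell $\id_{\id_B}$, so the new conditions imposed on maps out of $B$ are vacuous, and the existing universal property will suffice.

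First I would check the reflexivity conditions: from the coproduct equations $[s,\id_B]\cdot i_B = \id_B = [t,\id_B]\cdot i_B$ we see that the diagram is indeed reflexive in the 1-dimensional sense, and moreover the whiskerings $[\alpha,1]\ast i_B$ and $[\beta,1]\ast i_B$ are both the identity 2-cell $\id_{\id_B}$. Next, the key step is the following equivalence: for any $f: B \to X$, one has $f \ast [\alpha,1] = f \ast [\beta,1]$ if and only if $f \ast \alpha = f \ast \beta$. This follows from the universal property of the coproduct $A + B$ in the hom-category $\K(A+B,X)$: a 2-cell with a given domain and codomain there is determined by its restrictions along $i_A$ and $i_B$. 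Precomposing $f \ast [\alpha,1]$ with $i_A$ gives $f \ast \alpha$ and precomposing with $i_B$ gives $\id_f$; the same for $\beta$. Thus a 1-cell coequifies the augmented pair precisely when it coequifies the original one.

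Given this equivalence, the 1-dimensional part of the universal property for the new diagram follows immediately from the 1-dimensional part for the old diagram: any $f: B \to X$ that coequifies $[\alpha,1],[\beta,1]$ coequifies $\alpha,\beta$, and hence factors uniquely through $e$. The 2-dimensional part of the coequifier universal property, namely that every 2-cell $\tau: g_1 e \Rightarrow g_2 e$ between morphisms $g_1,g_2: C \to X$ factors uniquely as $\sigma \ast e$ for some $\sigma: g_1 \Rightarrow g_2$, does not depend on the parallel pair of 2-cells at all, so it carries over from the original coequifier without any modification.

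There is no real obstacle here; the only point requiring a little care is the equivalence of the coequifying conditions at the level of 2-cells, which is a routine application of the universal property of the 2-coproduct $A + B$ guaranteed by the hypothesis on $\K$.
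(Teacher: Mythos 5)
Your proof is correct and follows essentially the same route as the paper: verify reflexivity from the universal property of the 2-coproduct, and observe that the extra conditions imposed by the $B$-summand are void, so the two coequifying conditions (and hence the two universal properties) coincide. You spell out the 2-cell-level equivalence in more detail than the paper does, but the argument is the same.
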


\begin{proof}
To check that the above diagram is indeed reflexive we need to check whether the equalities
\begin{align*}
[s,\id_B] \cdot i_B = [t,\id_B] \cdot i_B = \id_B \\
[\alpha,1] * i_B = [\beta,1] * i_B = 1
\end{align*}
hold. This follows directly from the universal property of the 2-coproduct $A + B$.

Now any $e: B \to C$ that coequifies the diagram
$$
\begin{tikzpicture}
    \node (E) at (-1,0) {$A$};
    \node (V) at (1,0) {$B$};
    \node[rotate=-90] (L) at (-0.15,0) {$\Rightarrow$};
    \node[rotate=-90] (R) at (0.15,0) {$\Rightarrow$};
    \node (alpha) at (-0.4,0) {$\alpha$};
    \node (beta) at (0.4,0) {$\beta$};
    \draw[->] (E) edge [bend left=30] node [above] {$s$} (V);
    \draw[->] (E) edge [bend right=30] node [below] {$t$} (V);
\end{tikzpicture}
$$
also coequifies the reflexive diagram, as the restrictions imposed on $e: B \to C$ by the reflexivity conditions are void.
\end{proof}

\begin{remark}
\label{rem:sf-preserve-bof}
The above lemma is important because reflexive coequifiers are examples of \emph{sifted} colimits. We see that, under mild cocompleteness conditions, a coequifier $e: B \to C$ of some pair can be perceived as a coequifier of a reflexive pair. In the case of $\Mndsf(\Cat)$ we thus get that a quotient $e: T \twoheadrightarrow T^\prime$ is a coequifier
$$
\begin{tikzpicture}
    \node (E) at (-1,0) {$S$};
    \node (V) at (1,0) {$T$};
    \node (A) at (3,0) {$T^\prime$};
    \node[rotate=-90] (L) at (-0.15,0) {$\Rightarrow$};
    \node[rotate=-90] (R) at (0.15,0) {$\Rightarrow$};
%    \node (alpha) at (-0.4,0) {$\gamma$};
%    \node (beta) at (0.4,0) {$\delta$};
    \draw[->] (E) edge [bend left=30] node [above] {$ $} (V);
    \draw[->] (E) edge [bend right=30] node [below] {$ $} (V);
    \draw[->] (V) edge node [above] {$e$} (A);
\end{tikzpicture}
$$
and, since $\Mndsf(\Cat)$ is cocomplete, $e$ is also a reflexive coequifier
$$
\begin{tikzpicture}
    \node (E) at (-1,0) {$S+T$};
    \node (V) at (1,0) {$T$};
    \node (A) at (3,0) {$T^\prime.$};
    \node[rotate=-90] (L) at (-0.15,0.2) {$\Rightarrow$};
    \node[rotate=-90] (R) at (0.15,0.2) {$\Rightarrow$};
%    \node (alpha) at (-0.4,0) {$\gamma$};
%    \node (beta) at (0.4,0) {$\delta$};
    \draw[->] (E) edge [bend left=40] node [above] {$ $} (V);
    \draw[->] (E) edge node [below] {$ $} (V);
    \draw[->] (V) edge node [above] {$e$} (A);
    \draw[->] (V) edge [bend left=40] (E);
\end{tikzpicture}
$$
By Remark~\ref{rem:sf-monads-are-monadic}, we know that the forgetful 2-functor
$$
U: \Mndsf(\Cat) \to [N,\Cat]
$$
preserves sifted colimits, and therefore $$
e_n: Tn \twoheadrightarrow T^\prime n
$$
is a b.o.~full functor for any finite discrete category $n$.

Of course, for strongly finitary monads we may state an even stronger pointwise property of quotient monad maps: given a quotient $e: T \twoheadrightarrow T'$, its component $e_C: TC \to T'C$ is b.o.~full for every \emph{finitely presentable} category $C$. This follows since $T$ and $T'$ preserve sifted colimits, since every finitely presentable category is a sifted colimit (reflexive codescent) of finite discrete categories, and from commutativity of colimits. Moreover, every category is a filtered colimit of finitely presentable categories, and thus the component $e_C: TC \to T'C$ is b.o.~full for \emph{every} category.

In the same manner we can argue that every strongly finitary endo-2-functor $T: \Cat \to \Cat$ preserves b.o.~full functors, as they are coequifiers for some reflexive diagram in $\Cat$. We will use this fact very often in the following sections.
\end{remark}

\begin{remark}
\label{rem:alg-meaning-ptwise}
Let us give an algebraic meaning to the fact that a quotient $e: T \twoheadrightarrow T^\prime$ of strongly finitary 2-monads on $\Cat$ implies that every $e_n: Tn \twoheadrightarrow T^\prime n$ is b.o.~full in $\Cat$. Viewing the objects of $Tn$ as $n$-ary terms, bijectivity on objects of $e_n$ means that the quotient $e$ does not postulate any new equations between terms.
On the other hand, fullness of $e_n$ means that $T^\prime n$ is obtained from $Tn$ by identifying morphisms in $Tn$. On the level of algebras, this imposes equations between \emph{morphisms} of the underlying category of an algebra.
\end{remark}

\begin{example}
Let $\MonCat$ be the 2-category of monoidal categories, and $[2,0]\mbox{-}\Cat$ be the 2-category of ``monoidal categories without coherence''; that is, the objects of $[2,0]\mbox{-}\Cat$ are categories equipped with one nullary operation $I$, one binary operation $\otimes$, and two natural isomorphisms: the associator $\alpha$ with components $\alpha_{X,Y,Z}: (X \otimes Y) \otimes Z \to X \otimes (Y \otimes Z)$ and the left and right unitors $\lambda$ and $\rho$, with the components $\lambda_X: I \otimes X \to X$ and $
\rho_X: X \otimes I \to X$. Suppose that $T$ is the strongly finitary 2-monad such that $\Alg(T) \simeq [2,0]\mbox{-}\Cat$, and $T^\prime$ is the strongly finitary 2-monad such that $\Alg(\T^\prime) \simeq \MonCat$. (We know that $T$ is strongly finitary since there is a Lawvere 2-theory $\T$ for which $\Alg(T) \simeq \Alg(\T)$ holds.) There is an obvious algebraic forgetful 2-functor $U = \Alg(e): \MonCat \to [2,0]\mbox{-}\Cat$ that comes from a monad morphism $e: T \to T^\prime$, and $e$ is b.o.~full.
\end{example}

The last part of this section contains a 2-dimensional variant of the adjoint functor theorem. It will be useful when proving the Birkhoff theorem for the (b.o.~full, faithful) factorisation system in the next section.

\begin{remark}
Consider the case of ordinary categories, and suppose we are given a functor $U: \A \to \X$. For an object $X$ of $\X$, take an object $F_0 X$ of $\A$ together with a morphism $\eta_X: X \to U F_0 X$ in $\X$.

We may define $F_0 X$ to be \emph{(weakly) free on $X$} in terms of the properties of the action $b$ defined as in the diagram below
\begin{equation}
\label{eq:weak-free}
\begin{tikzcd}
\A(F_0 X, A) \ar{rr}{b} \drar[swap]{U_{F_0 X, A}} & & \X(X,UA) \\
& \X(U F_0 X, UA) \urar[swap]{\X(\eta_X,UA)}. & 
\end{tikzcd}
\end{equation}

\begin{enumerate}
\item If $b$ is an epi, then $\eta_X$ exhibits $F_0 X$ as \emph{weakly free on $X$}.
\item If $b$ is an iso, then $\eta_X$ exhibits $F_0 X$ as \emph{free on $X$}.
\end{enumerate}
\end{remark}

We shall use the above approach in the case of $\V = \Cat$ as well to define (weakly) free objects.

\begin{definition}
Let $U: \A \to \X$ be a 2-functor between 2-categories $\A$ and $\X$. We say that an object $F_0 X$ together with a morphism $\eta_X: X \to UF_0 X$ is
\begin{enumerate}
\item \emph{weakly free on $X$} if the functor $b$ in the diagram~\eqref{eq:weak-free} is a surjective on objects and full functor,
\item \emph{free on $\X$} if $b$ is an isomorphism of categories.
\end{enumerate}
\end{definition}

\begin{proposition}
\label{prop:2-dim-gaft}
Let $U$ be a
2-functor $U: \A \to \X$ between 2-categories $\A$ and $\X$. Suppose that $\A$ is 2-complete and $U$ is 2-continuous. Then $U$ has a 2-dimensional left adjoint $F: \X \to \A$ if for every object $X$ of $\X$ there exists a \emph{2-solution set}
$$
S_X = \{ X \xrightarrow{f_i} UA_i \mid i \in I \},
$$
meaning that the functor
\begin{align*}
s: \Coprod_{i \in I} \A(A_i,A) &\to \X(X,UA) \\
h & \mapsto Uh \cdot f_i \\
\alpha: h \Rightarrow h' &\mapsto U \alpha * f_i: Uh \cdot f_i \Rightarrow Uh' \cdot f_i
%A_i \xrightarrow{h} A &\mapsto X \xrightarrow{f_i} UA_i \xrightarrow{Uh} UA
\end{align*}
is surjective on objects and full.
\end{proposition}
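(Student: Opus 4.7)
The plan is to adapt Freyd's general adjoint functor theorem to the 2-dimensional setting, substituting 2-completeness of $\A$ and 2-continuity of $U$ for their 1-dimensional counterparts. The strategy has two stages: first use the 2-solution set and a product to obtain a candidate arrow $f_0 : X \to UA_0$ through which every $g : X \to UA$ factors; then rigidify by a single joint equaliser in $\A$ so that the induced $b : \A(FX, A) \to \X(X, UA)$ becomes an isomorphism of categories. Once each $\eta_X$ is a universal arrow, the 2-functor $F : \X \to \A$ and the 2-adjunction $F \dashv U$ follow by the standard parametrised argument applied to 1-cells and 2-cells of $\X$.

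For the construction, fix $X$ and form $A_0 := \prod_{i \in I} A_i$ in $\A$ with projections $\pi_i$; by 2-continuity of $U$, $UA_0 = \prod_i UA_i$, so the family $\{f_i\}$ assembles into a unique $f_0 = \langle f_i \rangle : X \to UA_0$ with $U\pi_i \cdot f_0 = f_i$. Let $\mathcal{P}$ be the set of pairs $(h,h') \in \A(A_0,A_0)^2$ with $Uh \cdot f_0 = Uh' \cdot f_0$, and let $e : FX \to A_0$ be the joint equaliser of $\mathcal{P}$ in $\A$. Since $U$ is 2-continuous, $Ue$ is the corresponding joint equaliser in $\X$, and $f_0$ equalises $\mathcal{P}$ by definition, so there is a unique $\eta_X : X \to UFX$ with $Ue \cdot \eta_X = f_0$.

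I would then verify that $b$ is an isomorphism by checking surjectivity on objects, injectivity on objects, fullness, and faithfulness in turn. Surjectivity on objects is immediate from the solution-set hypothesis: given $g : X \to UA$, pick $i$ and $h_i : A_i \to A$ with $Uh_i \cdot f_i = g$; then $b(h_i \pi_i e) = g$. Injectivity on objects is the classical Freyd trick: if $Uk \cdot \eta_X = Uk' \cdot \eta_X$, let $j : E \to FX$ be the equaliser of $k,k'$ in $\A$, factor $\eta_X$ as $Uj \cdot f_E$, and use surjectivity on objects of the precursor $b_0 : \A(A_0,A) \to \X(X,UA)$ to write $f_E = Up \cdot f_0$; then $ejp \in \A(A_0,A_0)$ satisfies $U(ejp) \cdot f_0 = f_0$, so $(ejp, \id_{A_0}) \in \mathcal{P}$ and $ejpe = e$, whence $jpe = \id_{FX}$ (since $e$ is a regular mono), so $j$ is a monic split epi and hence an isomorphism, giving $k = k'$. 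Fullness is obtained by lifting a given $\beta : Uk \cdot \eta_X \Rightarrow Uk' \cdot \eta_X$ via fullness of $s$ to a 2-cell $\alpha : h_i \Rightarrow h'_i$ between representatives lying in a single summand and whiskering with $\pi_i \cdot e$; the endpoints coincide with $k$ and $k'$ because the pairs $(h,h_i\pi_i)$ and $(h',h'_i\pi_i)$ lie in $\mathcal{P}$ and are therefore identified after postcomposition with $e$. Faithfulness is handled symmetrically, using the equifier of two 2-cells $\gamma_1, \gamma_2 : k \Rightarrow k'$ in $\A$ in place of the equaliser of $k$ and $k'$.

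The main subtlety is that the functor $b_0$ produced by the product alone need not be full in the strict sense: fullness of $s$ lifts 2-cells only between representatives $Uh_i \cdot f_i$, not between arbitrary $Uh \cdot f_0$. The joint equaliser of $\mathcal{P}$ therefore does double duty, simultaneously rigidifying the 1-cell lifts and forcing fullness of $b$ by collapsing any two $h,h' : A_0 \to A$ with the same $f_0$-composite after postcomposition with $e$. Both the 2-completeness of $\A$ (to form the joint equaliser and an auxiliary equifier) and the 2-continuity of $U$ (so that these limits are preserved) are essential here; mere 1-completeness and 1-continuity would not suffice.
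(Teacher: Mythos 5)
Your proof follows essentially the same route as the paper's: product over the solution set, the joint equaliser of all endomorphisms of $A_0$ agreeing after composition with $\langle f_i\rangle$, the split-epi/mono trick for the 1-dimensional universal property, and an equifier playing the analogous role for uniqueness of 2-cell lifts (the paper packages the 1-dimensional part by citing the ordinary adjoint functor theorem and organises the verification as weak freeness followed by freeness, but the content is identical). One slip in your fullness step: the pairs $(h,h_i\pi_i)$ you invoke cannot lie in $\mathcal{P}$, since $\mathcal{P}$ consists of pairs of \emph{endomorphisms} of $A_0$ while $h_i\pi_i\colon A_0\to A$; the correct justification that the whiskered 2-cell has endpoints $k$ and $k'$ is that $U(h_i\pi_i e)\cdot\eta_X=Uh_i\cdot f_i=Uk\cdot\eta_X$, so $h_i\pi_i e=k$ by the injectivity on objects you have already established --- an immediate fix using a fact you already proved, so the argument stands.
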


Before proceeding with the proof, let us describe the requirements for a 2-solution set in elementary terms.
\begin{enumerate}
\item Surjectivity on objects of $s$. This requirement corresponds to the 1-dimensional solution set condition. That is, for every morphism $f: X \to UA$ there exists a morphism $f_i: X \to UA_i$ from $S_X$ and a morphism $h: A_i \to A$ such that $f = Uh \cdot f_i$ holds. In a diagram:
$$
\begin{tikzcd}
X \rar{\exists f_i} \drar[swap]{\forall f}  & UA_i \dar{Uh} & A_i \dar{\exists h} \\
  & UA & A
\end{tikzcd}
$$
\item Fullness of $s$. This requirement corresponds to a 2-cell factorisation property. That is, for every 2-cell
$$
\begin{tikzpicture}
    \node (X) at (-1,0) {$X$};
    \node (V) at (1,0) {$UA$};
    \node[rotate=-90] (L) at (0,0) {$\Rightarrow$};
%    \node[rotate=-90] (R) at (0.15,0) {$\Rightarrow$};
    \node (alpha) at (0.3,0) {$\phi$};
%    \node (beta) at (0.4,0) {$\beta$};
    \draw[->] (X) edge [bend left=30] node [above] {$f$} (V);
    \draw[->] (X) edge [bend right=30] node [below] {$f'$} (V);
\end{tikzpicture}
$$
there exists a 2-cell
$$
\begin{tikzpicture}
    \node (X) at (-1,0) {$A_i$};
    \node (V) at (1,0) {$A$};
    \node[rotate=-90] (L) at (0,0) {$\Rightarrow$};
%    \node[rotate=-90] (R) at (0.15,0) {$\Rightarrow$};
    \node (alpha) at (0.3,0) {$\psi$};
%    \node (beta) at (0.4,0) {$\beta$};
    \draw[->] (X) edge [bend left=30] node [above] {$h$} (V);
    \draw[->] (X) edge [bend right=30] node [below] {$h'$} (V);
\end{tikzpicture}
$$
for some $i \in I$ such that $\phi = U \psi * f_i$.
\end{enumerate}

\begin{proof}[Proof of Proposition~\ref{prop:2-dim-gaft}]
The 1-dimensional property comes from the ordinary adjoint functor theorem, as it is proved, e.g., in~\cite{manes}. We denote by $A_0$ the product $\prod_{i \in I} A_i$ with projections $\pi_j: \prod_{i \in I} A_i \to A_j$. Since $U$ is 2-continuous, we have an isomorphism
$$U A_0 = U \prod_{i \in I} A_i \cong \prod_{i \in I} U A_i.$$
Let us define a morphism
$$
\langle f_i \rangle : X \to \prod_{i \in I} U A_i
$$
by the universal property of the product by demanding the equations $U \pi_j \cdot \langle f_i \rangle = f_j$ to hold. Denote by
\begin{center}
\begin{tikzpicture}
    \node (X) at (0,0) {$A_0$};
    \node (Y) at (2,0) {$A_0$};
    \node (ell) at (1,0) {\rotatebox{90}{$\cdots$}};
    
    \draw[transform canvas={yshift=1.6ex},->] (X) to node [above] {$h$} (Y);
    \draw[transform canvas={yshift=-1.6ex},->] (X) to node [below] {$h'$} (Y);
\end{tikzpicture}
\end{center}
the set of morphisms for which the diagram
\begin{center}
\begin{tikzpicture}
    \node (X) at (0,0) {$U A_0$};
    \node (Y) at (2,0) {$U A_0$};
    \node (ell) at (1,0) {\rotatebox{90}{$\cdots$}};
    
    \node (realX) at (1,-2) {$X$};
    \draw[->] (realX) to node [left] {$\langle f_i \rangle$} (X);
    \draw (realX) to node [right] {$\langle f_i \rangle$} (Y);
    
    \draw[transform canvas={yshift=1.6ex},->] (X) to node [above] {$Uh$} (Y);
    \draw[transform canvas={yshift=-1.6ex},->] (X) to node [below] {$Uh'$} (Y);
\end{tikzpicture}
\end{center}
commutes. We denote by
\begin{center}
\begin{tikzpicture}
    \node (X) at (0,0) {$A_0$};
    \node (Y) at (2,0) {$A_0$};
    \node (ell) at (1,0) {\rotatebox{90}{$\cdots$}};
    
    \node (F0X) at (-2,0) {$F_0 X$};
    \draw[->] (F0X) to node [above] {$e$} (X);
    
    \draw[transform canvas={yshift=1.6ex},->] (X) to node [above] {$h$} (Y);
    \draw[transform canvas={yshift=-1.6ex},->] (X) to node [below] {$h'$} (Y);
\end{tikzpicture}
\end{center}
the collective equaliser of the morphisms $h,\dots,h'$. Since $U$ is 2-continuous, the morphism $Ue$ is the collective equaliser of $Uh, \dots, Uh'$. By definition, the morphism ${\langle f_i \rangle}$ equalises $Uh, \dots, Uh'$, and thus we can define a morphism
$$
\eta_X: X \to UF_0 X.
$$
by the universal property of $Ue$. Let us first show that $X \xrightarrow{\langle f_i \rangle} UA_0$ is (2-dimensionally) weakly free on $X$. Given a situation
$$
\begin{tikzpicture}
    \node (X) at (-1,0) {$X$};
    \node (V) at (1,0) {$UA$};
    \node[rotate=-90] (L) at (0,0) {$\Rightarrow$};
%    \node[rotate=-90] (R) at (0.15,0) {$\Rightarrow$};
    \node (alpha) at (0.3,0) {$\phi$};
%    \node (beta) at (0.4,0) {$\beta$};
    \draw[->] (X) edge [bend left=30] node [above] {$f$} (V);
    \draw[->] (X) edge [bend right=30] node [below] {$f'$} (V);
\end{tikzpicture}
$$
there exists a 2-cell
$$
\begin{tikzpicture}
    \node (X) at (-1,0) {$A_0$};
    \node (V) at (1,0) {$A$};
    \node[rotate=-90] (L) at (0,0) {$\Rightarrow$};
%    \node[rotate=-90] (R) at (0.15,0) {$\Rightarrow$};
    \node (alpha) at (0.3,0) {$\psi$};
%    \node (beta) at (0.4,0) {$\beta$};
    \draw[->] (X) edge [bend left=30] node [above] {$h$} (V);
    \draw[->] (X) edge [bend right=30] node [below] {$h'$} (V);
\end{tikzpicture}
$$
such that
$$
U \psi \cdot \langle f_i \rangle = \phi.
$$
Indeed, by the 2-dimensional solution set condition we just use a projection from $A_0 = \prod A_i$.

Secondly, we will show that $X \xrightarrow{\eta_X} UF_0 X$ is weakly 2-dimensionally free. This is true since $\langle f_i \rangle$ is weakly 2-dimensionally free and since $\langle f_i \rangle$ factorises as $Ue \cdot \eta_X$. Thus for any morphism $f: X \to UA$ we have a morphism $h: A_0 \to A$ such that the right hand side triangle in the following diagram
$$
\begin{tikzcd}
UF_0 X \rar{Ue} & UA_0 \rar{Uh} & UA \\
& X \ular{\eta_X} \uar{\langle f_i \rangle} \arrow[swap]{ur}{f} &
\end{tikzcd}
$$
commutes, and $h \cdot e: F_0 X \to A$ witnesses the 1-dimensional property of weak freeness for $F_0 X$. Checking the 2-dimensional property is analogous. 

Thirdly, we show that $X \xrightarrow{\eta_X} UF_0 X$ is 2-dimensionally free. We know that $X \xrightarrow{\eta_X} UF_0 X$ is ordinarily free by the ordinary general adjoint functor theorem. For the 2-dimensional aspect, suppose the equalities
$$
\begin{tikzpicture}
    \node (X) at (0,1) {$X$};
    \node (V) at (0,-1) {$UA$};
%    \node (Y) at (2,-1) {$D$};
    \node (L) at (0,0) {$\Rightarrow$};
%    \node[rotate=-90] (R) at (0.15,0) {$\Rightarrow$};
    \node (alpha) at (0,0.3) {$\phi$};
%    \node (beta) at (0.4,0) {$\beta$};
    \draw[->] (X) edge [bend left=30] node [right] {$f'$} (V);
    \draw[->] (X) edge [bend right=30] node [left] {$f\phantom{'}$} (V);
%    \draw[->] (V) edge node [below] {$g$} (Y);

\node (eq) at (3,0) {$=$};

\begin{scope}[shift={(6,0)}]
    \node (X) at (0,1) {$U F_0 X$};
    \node (V) at (0,-1) {$UA$};
    \node (Y) at (-2,1) {$X$};
    \node (L) at (0,0) {$\Rightarrow$};
%    \node[rotate=-90] (R) at (0.15,0) {$\Rightarrow$};
    \node (alpha) at (0,0.3) {$\psi_i$};
%    \node (beta) at (0.4,0) {$\beta$};
    \draw[->] (X) edge [bend left=30] node [right] {$U (f'^\sharp)$} (V);
    \draw[->] (X) edge [bend right=30] node [left] {$U (f^\sharp)$} (V);
    \draw[->] (Y) edge node [above] {$\eta_X$} (X);
\end{scope}
\end{tikzpicture}
$$
hold for $\psi_i$ with $i \in \{1,2\}$.
We then need to prove that $\psi_1 = \psi_2$ holds. For this it is enough to show that in the equifier
$$
\begin{tikzpicture}
    \node (Z) at (-3,0) {$E$};
    \node (E) at (-1,0) {$F_0 X$};
    \node (V) at (1,0) {$A$};
    \node[rotate=-90] (L) at (-0.15,0) {$\Rightarrow$};
    \node[rotate=-90] (R) at (0.15,0) {$\Rightarrow$};
    \node (alpha) at (-0.4,0) {$\psi_1$};
    \node (beta) at (0.4,0) {$\psi_2$};
    \draw[->] (Z) edge node [above] {$j$} (E);
    \draw[->] (E) edge [bend left=30] node [above] {$f^\hash$} (V);
    \draw[->] (E) edge [bend right=30] node [below] {${f'}^\hash$} (V);
\end{tikzpicture}
$$
the morphism $j$ is iso. By 2-continuity of $U$, we have that
$$
\begin{tikzpicture}
    \node (Z) at (-3,0) {$UE$};
    \node (E) at (-1,0) {$UF_0 X$};
    \node (V) at (2,0) {$UA$};
    \node[rotate=-90] (L) at (0.45,0) {$\Rightarrow$};
    \node[rotate=-90] (R) at (0.65,0) {$\Rightarrow$};
    \node (alpha) at (0,0) {$U\psi_1$};
    \node (beta) at (1.1,0) {$U\psi_2$};
    \draw[->] (Z) edge node [above] {$Uj$} (E);
    \draw[->] (E) edge [bend left=30] node [above] {$Uf^\hash$} (V);
    \draw[->] (E) edge [bend right=30] node [below] {$U{f'}^\hash$} (V);
\end{tikzpicture}
$$
is also an equifier. Moreover, $\eta_X$ equifies $U \psi_1$ and $U \psi_2$. Thus we have a triangle 
$$
\begin{tikzpicture}
    \node (Z) at (-3,0) {$UE$};
    \node (E) at (-1,0) {$UF_0 X$};
    \node (V) at (2,0) {$UA$};
    \node[rotate=-90] (L) at (0.45,0) {$\Rightarrow$};
    \node[rotate=-90] (R) at (0.65,0) {$\Rightarrow$};
    \node (alpha) at (0,0) {$U\psi_1$};
    \node (beta) at (1.1,0) {$U\psi_2$};
    
    \node (X) at (-2,-1.5) {$X$};    
    \draw[->, dotted] (X) edge node [below left] {$h$} (Z);
    \draw[->] (X) edge node [below right] {$\eta_X$} (E);
    
    \draw[->] (Z) edge node [above] {$Uj$} (E);
    \draw[->] (E) edge [bend left=30] node [above] {$Uf^\hash$} (V);
    \draw[->] (E) edge [bend right=30] node [below] {$U{f'}^\hash$} (V);
\end{tikzpicture}
$$
from the universal property of $Uj$. Now we see that the diagram
$$
\begin{tikzcd}
UA_0 \rar{Uk} & UE \rar{Uj} & UF_0 X \dar{Ue} \\
& X \ular{\langle f_i \rangle} \uar{h} \urar[swap]{\eta_X} \rar[swap]{\langle f_i \rangle} & UA_0
\end{tikzcd}
$$
commutes, as the triangles commute (from left to right) by ordinary weak freeness of $\langle f_i \rangle:  X \to UA_0$, by the definition of $h$ and by the definition of $\eta_X$.

Thus both the morphisms $\id_{A_0}: A_0 \to A_0$ and
$$
A_0 \xrightarrow{k} E \xrightarrow{j} F_0 X \xrightarrow{e} A_0
$$
are present in the (collective) equaliser diagram defining the morphism $e$, and
$$
\begin{tikzcd}
F_0 X \rar{e} & A_0 \ar[bend left]{rrr}{\id_{A_0}} \rar{k} & E \rar{j} & F_0 X \rar{e} & A_0
\end{tikzcd}
$$
commutes. By the ordinary freeness of $F_0 X$ the diagram
$$
\begin{tikzcd}
F_0 X \rar{e} \ar[bend left]{rrr}{\id_{F_0 X}} & A_0 \rar{k} & E \rar{j} & F_0 X
\end{tikzcd}
$$
commutes as well. Since $e$ is mono, it follows that $j$ is split epi. But $j$ is mono, being an equifier. Therefore $j$ is an isomorphism, as we wanted to show.
\end{proof}

\begin{remark}
\label{rem:2-dim-aft-usage}
We will use the above proposition in the following way. A 2-functor $U: \A \to \X$ has a 2-dimensional adjoint if and only if
\begin{enumerate}
\item $U$ has an ordinary adjoint $F$ witnessed by the natural isomorphism
$$
({-})^\sharp: \X(X,UA) \to \A(FX,A)
$$
of sets, and
\item for every $X$ in $\X$ and a 2-cell
$$
\begin{tikzpicture}
    \node (E) at (-1,0) {$X$};
    \node (V) at (1,0) {$UA,$};
    \node[rotate=-90] (L) at (0,0) {$\Rightarrow$};
%    \node[rotate=-90] (R) at (0.15,0) {$\Rightarrow$};
    \node (alpha) at (0.3,0) {$\alpha$};
%    \node (beta) at (0.4,0) {$\beta$};
    \draw[->] (E) edge [bend left=30] node [above] {$f$} (V);
    \draw[->] (E) edge [bend right=30] node [below] {$f'$} (V);
\end{tikzpicture}
$$
there is a 2-cell $\alpha^\sharp: f^\sharp \Rightarrow (f')^\sharp$ with $\alpha = U \alpha^\sharp * \eta_X$.
\end{enumerate}
This criterion is useful in the situations where the ordinary adjoint is easy to obtain.
\end{remark}

\section{Birkhoff theorem for the kernel-quotient system $\F_\bof$}

In this section we state and prove the Birkhoff theorem for the $\F_\bof$ kernel-quotient system. We will first recall basic definitions concerning subcategories and equivalence of categories in the $\Cat$-enriched setting. Then we review the 2-dimensional aspects of algebraic categories and algebraic functors, some important properties of algebraic categories, and we define the right notion of a subalgebra and a quotient algebra that is used in the main theorem.

\begin{definition}
A 2-functor $T: \K \to \LL$ between 2-categories $\K$ and $\LL$ is called \emph{fully faithful} if for any pair $X,Y$ of objects of $\X$ the action $T_{X,Y}: \K(X,Y) \to \LL(TX,TY)$ is an \emph{isomorphism} of categories. We say that $T$ exhibits $\K$ as a \emph{full subcategory} of $\LL$. When $\K$ is moreover closed under isomorphisms in $\LL$, we call $\K$ a \emph{replete} full subcategory of $\LL$. By closure under isomorphisms we mean that for any object $X$ in $\K$ and any isomorphism $i: TX \to Z$ in $\LL$ there exists an isomorphism $j: X \to Y$ in $\K$ such that $Tj = i$.

The 2-categories $\K$ and $\LL$ are \emph{equivalent} if there is a fully faithful 2-functor $T: \K \to \LL$ that is \emph{essentially surjective}, that is, for any object $Z$ of $\LL$ there exists an object $X$ of $\K$ with $TX \cong Z$.
\end{definition}

\begin{remark}
For a 2-monad $T$ on a 2-category $\C$, we denote the 2-category of $T$-algebras and their homomorphisms by $\Alg(T)$. We call the categories equivalent to the categories of the form $\Alg(T)$ \emph{algebraic}. Let us recall the 2-dimensional structure of $\Alg(T)$. Given two $T$-algebras $a: TA \to A$ and $b: TB \to B$, and two homomorphisms $h,h^\prime: A \to B$ between $(A,a)$ and $(B,b)$, the 2-cells $\alpha: h^\prime \Rightarrow h$ between the homomorphisms $h^\prime$ and $h$ are exactly those 2-cells $\alpha: h^\prime \Rightarrow h$ between the morphisms $h^\prime$ and $h$ in $\Cat$ that moreover satisfy the following equality:
$$
\begin{tikzpicture}
    \node (X) at (0,1) {$TA$};
    \node (V) at (2,1) {$TB$};
    \node (Y) at (2,-1) {$B$};
    \node[rotate=-90] (L) at (0.9,1) {$\Rightarrow$};
%    \node[rotate=-90] (R) at (0.15,0) {$\Rightarrow$};
    \node (alpha) at (1.3,1) {$T\alpha$};
%    \node (beta) at (0.4,0) {$\beta$};
    \draw[->] (X) edge [bend left=30] node [above] {$Th'$} (V);
    \draw[->] (X) edge [bend right=30] node [below] {$Th\phantom{'}$} (V);
    \draw[->] (V) edge node [right] {$b$} (Y);

\node (eq) at (3,0) {$=$};

\begin{scope}[shift={(6,0)}]
    \node (X) at (-2,-1) {$A$};
    \node (V) at (0,-1) {$B$};
    \node (Y) at (-2,1) {$TA$};
    \node[rotate=-90] (L) at (-1,-1) {$\Rightarrow$};
%    \node[rotate=-90] (R) at (0.15,0) {$\Rightarrow$};
    \node (alpha) at (-0.7,-1) {$\alpha$};
%    \node (beta) at (0.4,0) {$\beta$};
    \draw[->] (X) edge [bend left=30] node [above] {$h'$} (V);
    \draw[->] (X) edge [bend right=30] node [below] {$h\phantom{'}$} (V);
    \draw[->] (Y) edge node [left] {$a$} (X);
\end{scope}
\end{tikzpicture}
$$
\end{remark}

\begin{remark}
\label{rem:alg}
Consider two 2-monads $T$ and $T'$ on $\Cat$, and a monad morphism $e: T \to T'$. This monad morphism gives rise to an algebraic 2-functor $\Alg(e): \Alg(T') \to \Alg(T)$ between the 2-categories $\Alg(T')$ and $\Alg(T)$ of algebras for $T'$ and $T$. On objects, $\Alg(e)$ acts as follows:
$$
\begin{tikzpicture}
    \node (TpA) at (0,1) {$T'A$};
    \node (A) at (0,-1) {$A$};
    \node (TA) at (2,1) {$TA$};
    \node (TpA2) at (2,0) {$T'A$};
    \node (A2) at (2,-1) {$A$};
    
\node (mapsto) at (1,0) {$\mapsto$};

	\draw[->] (TpA) edge node [left] {$a'$} (A);
	\draw[->] (TA) edge node [right] {$e_A$} (TpA2);
	\draw[->] (TpA2) edge node [right] {$a'$} (A2);
\end{tikzpicture}
$$
On morphisms and 2-cells $\Alg(e)$ acts as an identity. A homomorphism $h: (A,a') \to (B,b')$ between two $T^\prime $-algebras $a': T^\prime A \to A$ and $b': T^\prime B \to B$ gets mapped to a homomorphism $h: (A, a' \cdot e_A) \to (B,b' \cdot e_B)$ of the corresponding $T$-algebras. Indeed, the outer rectangle in the diagram
$$
\xymatrix{
TA \ar@{->}[d]_{e_A} \ar[r]^{Th} & TB \ar@{->}[d]^{e_B}\\
T^\prime A \ar[d]_{a'} \ar[r]^{T^\prime h} & T^\prime B \ar[d]^{b'}\\
A \ar[r]_h & B
}
$$
clearly commutes. The same reasoning applies for the 2-cells $\alpha: h \to h^\prime$ between two homomorphisms $h: (A,a') \to (B,b')$ and $h^\prime: (A,a') \to (B,b')$.

The action of $\Alg(e)$ on morphisms and 2-cells is thus faithful.
\end{remark}

We will state Birkhoff's theorem using certain closure properties, including the closure under \emph{quotient algebras} and \emph{subalgebras}.

\begin{definition}
Let us fix a strongly finitary 2-monad $T$ from $\Mndsf(\Cat)$, and take an algebra $a: TA \to A$ from $\Alg(T)$. We say that $b: TB \to B$ is a \emph{subalgebra} of $a: TA \to A$ if there is a faithful functor $m: A \rightarrowtail B$ such that the diagram
$$
\xymatrix{
T B \ar[d]_b \ar[r]^{T m} & T A \ar[d]^{a}\\
B \ar@{ >->}[r]_m & A
}
$$
commutes.
By a \emph{quotient algebra} of an algebra $a: TA \to A$ we mean an algebra $b: TB \to B$ together with a b.o.~full morphism $h: A \twoheadrightarrow B$ in $\Cat$ that is a homomorphism, i.e., the diagram
$$
\xymatrix{
T A \ar[d]_a \ar@{->>}[r]^{T h} & T B \ar[d]^{b}\\
A \ar@{->>}[r]_h & B
}
$$
commutes.
\end{definition}

Let us remark that in the above diagram concerning quotient algebras, the morphism $Th: TA \twoheadrightarrow TB$ is indeed b.o.~full by Remark~\ref{rem:sf-preserve-bof} since $h$ is and $T$ is strongly finitary.

\begin{remark}
\label{rem:cowelpoweredness}
The 2-category $\Alg(T)$ of algebras for a strongly finitary 2-monad $T$ is cowellpowered with respect to quotient algebras. Indeed, for every small category $A$ there is, up to isomorphism, only a set of b.o.~full functors of the form $e: A \to B$ in $\Cat$. Thus for a $T$-algebra $(A,a)$ there is, up to isomorphism, only a set of quotients $e: (A,a) \to (B,b)$ in $\Alg(T)$.
\end{remark}

\begin{remark}
\label{rem:creation}
Given an algebraic 2-category $\Alg(T)$ for a strongly finitary 2-monad $T$ on $\Cat$, it is a standard observation that the underlying 2-functor $U: \Alg(T) \to \Cat$ creates limits. Since $T$ is strongly finitary, the 2-functor $U$ also creates sifted colimits. In particular, $U$ creates reflexive coequifiers. That is, given a reflexive diagram
$$
\begin{tikzpicture}
    \node (E) at (-1,0) {$(K,k)$};
    \node (V) at (1,0) {$(A,a)$};
%    \node (A) at (3,0) {$T^\prime.$};
    \node[rotate=-90] (L) at (-0.15,0.2) {$\Rightarrow$};
    \node[rotate=-90] (R) at (0.15,0.2) {$\Rightarrow$};
%    \node (alpha) at (-0.4,0) {$\gamma$};
%    \node (beta) at (0.4,0) {$\delta$};
    \draw[->] (E) edge [bend left=40]
    node [above] {$f$}
    (V);
    \draw[->] (E) edge
    node [below] {$g$}
    (V);
%    \draw[->] (V) edge node [above] {$e$} (A);
    \draw[->] (V) edge [bend left=40]
    node [below] {$h$}
    (E);
\end{tikzpicture}
$$
in $\Alg(T)$, and the coequifier of the $U$-image of the above diagram
$$
\begin{tikzpicture}
    \node (E) at (-1,0) {$K$};
    \node (V) at (1,0) {$A$};
    \node (A) at (3,0) {$C$};
    \node[rotate=-90] (L) at (-0.15,0.2) {$\Rightarrow$};
    \node[rotate=-90] (R) at (0.15,0.2) {$\Rightarrow$};
%    \node (alpha) at (-0.4,0) {$\gamma$};
%    \node (beta) at (0.4,0) {$\delta$};
    \draw[->] (E) edge [bend left=40]
    node [above] {$Uf$}
    (V);
    \draw[->] (E) edge
    node [below] {$Ug$}
    (V);
    \draw[->] (V) edge node [above] {$c$} (A);
    \draw[->] (V) edge [bend left=40]
    node [below] {$Uh$}
    (E);
\end{tikzpicture}
$$
there exists a unique algebra $(C,c)$ such that $c$ is a homomorphism between $(A,a)$ and $(C,c)$.
\end{remark}

We now turn to the proof of the Birkhoff theorem. 

\begin{theorem}[Two-dimensional Birkhoff theorem]
\label{thm:birkhoff}
Let $T$ be a strongly finitary 2-monad on $\Cat$ and let $\A$ be a full subcategory $\Alg(T)$ of the category of algebras for the 2-monad $T$. Then the following are equivalent:
\begin{enumerate}
\item There is a strongly finitary 2-monad $T^\prime $ and a b.o.~full monad morphism $e: T \twoheadrightarrow T^\prime $ such that the comparison 2-functor $\A \to \Alg(T^\prime)$ is an equivalence.
%There is an equivalence of categories $\A \simeq \Alg(T^\prime )$ for some .
\item The category $\A$ is closed in $\Alg(T)$ under sifted colimits, 2-products, quotient algebras, and subalgebras.
\end{enumerate}
\end{theorem}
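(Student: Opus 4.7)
The proof divides into two directions; the bulk of the work lies in $(2) \Rightarrow (1)$.

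For $(1) \Rightarrow (2)$, I would first observe that a $T$-algebra $(A,a)$ lies in the essential image of $\Alg(e)$ if and only if $a: TA \to A$ factors through $e_A: TA \to T^\prime A$. Such a factorisation is unique (since $e_A$ is b.o.~full and hence epic in $\Cat$), and the resulting $a^\prime: T^\prime A \to A$ is automatically a $T^\prime$-algebra structure, because $e$ is a monad morphism and the b.o.~full maps $e_{TA}$, $T^\prime e_A$ are epic by Remark~\ref{rem:sf-preserve-bof}. Closure of $\A$ under 2-products and sifted colimits then follows from $U: \Alg(T) \to \Cat$ creating these, combined with strong finitarity of $T$ and pointwise b.o.~fullness of $e$. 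For subalgebra closure: if $m: (B,b) \to (A,a)$ is faithful with $a = a^\prime \cdot e_A$, naturality of $e$ yields $m \cdot b = (a^\prime \cdot T^\prime m) \cdot e_B$, and orthogonality of the b.o.~full $e_B$ against the faithful $m$ produces a unique $b^\prime: T^\prime B \to B$ with $b^\prime \cdot e_B = b$, witnessing $(B,b) \in \A$. Quotient closure follows analogously, using that $T^\prime h$ is b.o.~full whenever $h$ is.

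For $(2) \Rightarrow (1)$, the plan is to produce $T^\prime$ as the 2-monad of a left 2-adjoint $F_\A \dashv U_\A$, where $U_\A: \A \to \Cat$ is the composite of the inclusion with the underlying 2-functor. Proposition~\ref{prop:2-dim-gaft} applies: $\A$ is 2-complete and $U_\A$ is 2-continuous by closure of $\A$ under 2-products and subalgebras, since equalisers, inserters and equifiers computed in $\Alg(T)$ yield faithful inclusions. The required 2-solution set at $C \in \Cat$ is indexed (up to isomorphism) by all b.o.~full quotients $q: TC \twoheadrightarrow Q$ in $\Alg(T)$ with $Q \in \A$; this is a set by Remark~\ref{rem:cowelpoweredness}. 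Surjectivity on objects: any $f: C \to UA$ with $A \in \A$ extends to $f^*: TC \to A$ in $\Alg(T)$, whose $(\F_\bof,\text{faithful})$-factorisation $TC \twoheadrightarrow Q \hookrightarrow A$ (lifted to $\Alg(T)$ by exactness) has $Q \in \A$ by subalgebra closure. For 2-dimensional fullness, given a 2-cell between two morphisms factoring through different solution-set members, a common refinement is produced by $(\F_\bof,\text{faithful})$-factoring $\langle q_1, q_2 \rangle: TC \to Q_1 \times Q_2$, through which both morphisms then factor, after which Lemma~\ref{lem:cancel-2-cells} supplies the lift.

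Setting $T^\prime = U_\A F_\A$ produces a strongly finitary 2-monad, since $F_\A$ preserves all colimits and $U_\A$ preserves sifted colimits (these are created by $U$ and $\A$ is closed under them). To see the induced monad morphism $e: T \to T^\prime$ is pointwise b.o.~full, factor $e_C: TC \to F_\A C$ as $TC \twoheadrightarrow Q \hookrightarrow F_\A C$ in $\Cat$ and lift to $\Alg(T)$ by $\F_\bof$-exactness. Subalgebra closure gives $Q \in \A$, and the universal property of $F_\A C$ applied to the composite $C \to UQ$ yields a unique algebra map $r: F_\A C \to Q$ with $r \cdot e_C = q$. Then $r \cdot m \cdot q = q$ together with the epi-ness of $q$ gives $r \cdot m = \id_Q$, while the uniqueness of algebra extensions of $\eta_C$ gives $m \cdot r = \id_{F_\A C}$; hence $m$ is an isomorphism and $e_C$ is b.o.~full.

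Finally, the comparison $K: \A \to \Alg(T^\prime)$ is fully faithful: faithfulness is inherited from $U$, and any $T^\prime$-algebra map $h: KA \to KB$ is forced to be a $T$-algebra map by precomposing with $e_A$ and applying naturality, hence lies in $\A$ since $\A \hookrightarrow \Alg(T)$ is full. Essential surjectivity follows from three facts: every $T^\prime$-algebra is a reflexive codescent colimit of free $T^\prime$-algebras, the free $T^\prime$-algebras coincide with $K(F_\A C)$ via the adjunction, and $K$ preserves sifted colimits (both $U_\A$ and the forgetful from $\Alg(T^\prime)$ create them, and $K$ commutes with these forgetfuls). I expect the main obstacle to be showing $e$ is pointwise b.o.~full in the third paragraph, since this step must carefully combine the universal property of $F_\A C$, the $\F_\bof$-exactness of $\Alg(T)$, and the subalgebra closure of $\A$.
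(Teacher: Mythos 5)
Your overall architecture coincides with the paper's: for $(1)\Rightarrow(2)$ you use epi-ness of the pointwise b.o.~full $e_A$ for full faithfulness and orthogonality against faithful maps for subalgebra closure; for $(2)\Rightarrow(1)$ you invoke Proposition~\ref{prop:2-dim-gaft} with a solution set of quotients (a set by Remark~\ref{rem:cowelpoweredness}, landing in $\A$ by subalgebra closure), extract $T'$ from the induced adjunction, and finish with a Beck-type argument for the comparison. The local variations are harmless and in one place arguably cleaner: you adjoint $\A$ directly over $\Cat$ rather than first reflecting into $\Alg(T)$, you establish pointwise b.o.~fullness of $e$ a posteriori by factorising $e_C$ and showing the faithful part is a split-epi mono (the paper gets this by constructing the unit as the quotient part of the map into the product of the solution set), and you handle the 2-dimensional solution-set condition by a common refinement through $Q_1\times Q_2$ plus Lemma~\ref{lem:cancel-2-cells} rather than via the coequifier property of the constructed unit. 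Note only that the 2-cell produced by Lemma~\ref{lem:cancel-2-cells} lives in $\Cat$ and must still be checked to be an algebra 2-cell, which again uses that $Tq$ is b.o.~full.

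The one genuine weak point is the claim that quotient closure in $(1)\Rightarrow(2)$ ``follows analogously'' to subalgebra closure. It does not: the subalgebra case produces $b'$ as the diagonal fill-in for the square with $e_B$ on one side and the \emph{faithful} $m$ on the other, whereas for a quotient $h:(A,a)\twoheadrightarrow(B,b)$ there is no faithful map to be orthogonal against, and epi-ness of $T'h$ only gives uniqueness of $b'$, not existence. To construct $b'$ one must use the couniversal property of $T'h$ as a coequifier: form the (reflexive) kernel of $h$ in $\Alg(T)$, transport it along $e$ to obtain a $T'$-algebra structure $k'$ on the kernel, and then invoke creation of reflexive coequifiers (Remark~\ref{rem:creation}) to induce $b'$ and verify the algebra axioms. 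This is exactly how the paper argues, and it is a genuinely different mechanism from the orthogonality argument; your sketch should be expanded along these lines.
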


\begin{proof}
We first prove the implication (1) $\Rightarrow$ (2) in the following manner:
\begin{enumerate}[(a)]
\item Given the monad morphism $e: T \twoheadrightarrow T^\prime $, we get a 2-functor $\Alg(e): \Alg(T^\prime ) \to \Alg(T)$ that we show to be fully faithful.
\item We show that $\Alg(e)$ preserves sifted colimits and limits.
\item Finally we show that $\Alg(T^\prime )$ is closed in $\Alg(T)$ under subalgebras and quotient algebras.
\end{enumerate}

Ad (a): The action of $\Alg(e)$ on morphisms and 2-cells is faithful by Remark~\ref{rem:alg}. We prove that $\Alg(e)$ is indeed fully faithful by showing that its action on morphisms and 2-cells is full. The fullness on morphisms comes from observing that given any diagram of the form
$$
\xymatrix{
TA \ar@{->>}[d]_{e_A} \ar[r]^{Th} & TB \ar@{->>}[d]^{e_B}\\
T^\prime A \ar[d]_{a'} \ar[r]^{T^\prime h} & T^\prime B \ar[d]^{b'}\\
A \ar[r]_h & B
}
$$
such that the outer rectangle commutes, we can show that the square
$$
\xymatrix{
T^\prime A \ar[d]_{a'} \ar[r]^{T^\prime h} & T^\prime B \ar[d]^{b'}\\
A \ar[r]_h & B
}
$$
commutes as well. The morphism $e_A: TA \twoheadrightarrow T^\prime A$ is b.o.\ full, and thus epi. The above square therefore commutes by the cancellation property of $e_A$. Similarly, given a 2-cell $\alpha: h \Rightarrow h^\prime$ satisfying the equality
$$
(b' \cdot e_B) * T \alpha = \alpha * (a' \cdot e_A),
$$
we infer that
$$
b' * T^\prime \alpha = \alpha * a'
$$
holds by 2-naturality of $e$ (saying that $e_B * T \alpha = \alpha * e_a$), and from $e_A$ being an epimorphism on 2-cells by~Lemma~\ref{lem:cancel-2-cells}. The algebraic 2-functor $\Alg(e)$ is therefore indeed fully faithful.

Ad (b): Since $\Alg(T^\prime)$ is 2-complete and 2-cocomplete, the algebraic 2-functor $\Alg(e)$ has a left adjoint by Theorem~3.9 of~\cite{bkp}, and thus preserves limits. It also preserves sifted colimits  since these are computed on the level of underlying categories in both $\Alg(T^\prime)$ and $\Alg(T)$.

Ad (c): Now we show that the 2-category $\Alg(T^\prime)$ is closed in $\Alg(T)$ under subalgebras and quotient algebras. To this end, consider an algebra $a': T^\prime A \to A$ and its image $a= a' \cdot e_A: TA \to A$ under $\Alg(e)$. Given any subalgebra $b: TB \to B$
$$
\xymatrix{
TB \ar[dd]_{b} \ar[r]^{Tm} & TA \ar[d]^{e_A} \\
& T^\prime A \ar[d]^{a} \\
B \ar@{ >->}[r]_{m} & A
}
$$
of $(A, a)$, we can use the naturality of $e$
$$
\xymatrix{
TB \ar@{->>}[dr]_{e_B} \ar[dd]_{b} \ar[rr]^{Tm} & & TA \ar@{->>}[d]^{e_A} \\
& T^\prime B \ar@{.>}[dl]^{b^\prime} \ar[r]_{T^\prime m} & T^\prime A \ar[d]^{a'} \\
B \ar@{ >->}[rr]_{m} & & A
}
$$
and define $b^\prime$ as the unique diagonal fill-in with respect to $e_B$ and $m$ in the above diagram. (Recall that (b.o.~full, faithful) is a factorisation system on $\Cat$.) This $b^\prime: T^\prime B \to B$ is a $T^\prime$-algebra: it satisfies both the algebra axioms. Firstly, the lower triangle in the following diagram
$$
\xymatrix{
& TB \ar@{->>}[d]^{e_B} \ar@{->} `r[d] `[dd]^{b} [dd] \\
B \ar[ur]^{\eta^T_B} \ar[r]^{\eta^{T^\prime}_B} \ar[dr]_{\id_B} & T^\prime B \ar[d]^{b^\prime} \\
& B
}
$$
commutes since the upper one commutes by the unit axiom of a monad morphism and the outer one commutes as $(B,b)$ is a $T$-algebra. The outer square in the following diagram
$$
\xymatrix{
TTB \ar[rr]^{\mu^T_B} \ar@{->>}[d]_{Te_B} & & TB \ar[d]^{e_B} \ar@{->} `r[d] `[dd]^{b} [dd] \\
TT^\prime B \ar@{->>}[r]^{e_{T^\prime B}} \ar[d]_{Tb^\prime} & T^\prime B \ar[d]_{T^\prime b^\prime} \ar[r]^{\mu^{T^\prime}_B} & T^\prime B \ar[d]^{b^\prime} \\
TB \ar@{->>}[r]_{e_B} & T^\prime B \ar[r]_{b^\prime} & B
}
$$
commutes since $(B,b)$ is a $T$-algebra. The upper rectangle is an instance of a monad morphism axiom, and the lower left square commutes by naturality of $e$. The morphism $Te_B$ is b.o.\ full, as $e_B$ is and $T$ preserves them by Remark~\ref{rem:sf-preserve-bof}. Thus the composite morphism $e_{T^\prime B} \cdot Te_B$ is b.o.~full as well. By the cancellation property of b.o.~full morphisms we obtain the commutativity of the square
$$
\xymatrix{
T^\prime B \ar[d]_{T^\prime b^\prime} \ar[r]^{\mu^{T^\prime}_B} & T^\prime B \ar[d]^{b^\prime} \\
T^\prime B \ar[r]_{b^\prime} & B,
}
$$
and this proves that $(B,b')$ is a $T^\prime$-algebra. In conclusion, $\Alg(T^\prime )$ is indeed closed in $\Alg(T)$ under subalgebras.

The closure of $\Alg(T^\prime)$ under quotient algebras in $\Alg(T)$ follows from closure under limits and sifted colimits. We are given a $T^\prime$-algebra $a': T^\prime A \to A$ and a quotient homomorphism $h: (A,a ) = (A, a' \cdot e_A) \twoheadrightarrow (B,b)$ of $T$-algebras as in the diagram
$$
\xymatrix{
TA \ar@{->>}[d]_{e_A} \ar@{->>}[r]^{Th} & TB \ar@{->}[dd]^{b}\\
T^\prime A \ar[d]_{a'} & \\
A \ar@{->>}[r]_h & B
}
$$
We can form a kernel
$$
\begin{tikzpicture}
    \node (E) at (-1,0) {$(K,k)$};
    \node (V) at (1,0) {$(A,a)$};
%    \node (A) at (3,0) {$T^\prime.$};
    \node[rotate=-90] (L) at (-0.15,0.2) {$\Rightarrow$};
    \node[rotate=-90] (R) at (0.15,0.2) {$\Rightarrow$};
%    \node (alpha) at (-0.4,0) {$\gamma$};
%    \node (beta) at (0.4,0) {$\delta$};
    \draw[->] (E) edge [bend left=40] node [above] {$f$} (V);
    \draw[->] (E) edge node [below] {$g$} (V);
%    \draw[->] (V) edge node [above] {$e$} (A);
    \draw[->] (V) edge [bend left=40] (E);
\end{tikzpicture}
$$
of $h$ in $\Alg(T)$ that is reflexive by construction, and whose coequifier is $h$ itself. We therefore have a diagram
$$
\begin{tikzpicture}
    \node (TK) at (-1,4) {$TK$};
    \node (TA) at (1,4) {$TA$};
    \node (TB) at (3,4) {$TB$};

	\node (TpA) at (1,2) {$T^\prime A$};

    \node (K) at (-1,0) {$K$};
    \node (A) at (1,0) {$A$};
    \node (B) at (3,0) {$B$};

    \node[rotate=-90] (L) at (-0.15,0.2) {$\Rightarrow$};
    \node[rotate=-90] (R) at (0.15,0.2) {$\Rightarrow$};
    \node[rotate=-90] (L) at (-0.15,4.2) {$\Rightarrow$};
    \node[rotate=-90] (R) at (0.15,4.2) {$\Rightarrow$};
%    \node (alpha) at (-0.4,0) {$\gamma$};
%    \node (beta) at (0.4,0) {$\delta$};

    \draw[->] (TK) edge [bend left=40] node [above] {$Tf$} (TA);
    \draw[->] (TK) edge node [below] {$Tg$} (TA);
    \draw[->] (TA) edge [bend left=40] (TK);
    \draw[->] (TA) edge node [above] {$Th$} (TB);

	\draw[->] (TK) edge node [left] {$k$} (K);
	%\draw[->] (TA)  .. controls (1.5,3.5) and (1.5,0.5) .. (A);
	\draw[->] (TA) edge [bend left=30] node [right] {$a$} (A);
	\draw[->>] (TA) edge node [left] {$e_A$} (TpA);
	\draw[->] (TpA) edge node [left] {$a'$} (A);
	\draw[->] (TB) edge node [right] {$b$} (B);

    \draw[->] (K) edge [bend left=40] node [above] {$f$} (A);
    \draw[->] (K) edge node [below] {$g$} (A);
    \draw[->] (A) edge [bend left=40] (K);
    \draw[->] (A) edge node [below] {$h$} (B);
\end{tikzpicture}
$$
where the diagram
$$
\begin{tikzpicture}
    \node (E) at (-1,0) {$K$};
    \node (V) at (1,0) {$A$};
%    \node (A) at (3,0) {$T^\prime.$};
    \node[rotate=-90] (L) at (-0.15,0.2) {$\Rightarrow$};
    \node[rotate=-90] (R) at (0.15,0.2) {$\Rightarrow$};
%    \node (alpha) at (-0.4,0) {$\gamma$};
%    \node (beta) at (0.4,0) {$\delta$};
    \draw[->] (E) edge [bend left=40] node [above] {$f$} (V);
    \draw[->] (E) edge node [below] {$g$} (V);
%    \draw[->] (V) edge node [above] {$e$} (A);
    \draw[->] (V) edge [bend left=40] (E);
\end{tikzpicture}
$$
is the kernel of $h$ in $\Cat$. Now we can use naturality of $e$ and observe that
$$
\begin{tikzpicture}
    \node (TK) at (-1,4) {$TK$};
    \node (TA) at (1,4) {$TA$};
    \node (TB) at (3,4) {$TB$};

    \node (TpK) at (-1,2) {$T^\prime K$};
    \node (TpA) at (1,2) {$T^\prime A$};
    \node (TpB) at (3,2) {$T^\prime B$};

    \node (K) at (-1,0) {$K$};
    \node (A) at (1,0) {$A$};
    \node (B) at (3,0) {$B$};

    \node[rotate=-90] (L) at (-0.15,0.2) {$\Rightarrow$};
    \node[rotate=-90] (R) at (0.15,0.2) {$\Rightarrow$};
    \node[rotate=-90] (L) at (-0.15,2.2) {$\Rightarrow$};
    \node[rotate=-90] (R) at (0.15,2.2) {$\Rightarrow$};
    \node[rotate=-90] (L) at (-0.15,4.2) {$\Rightarrow$};
    \node[rotate=-90] (R) at (0.15,4.2) {$\Rightarrow$};
%    \node (alpha) at (-0.4,0) {$\gamma$};
%    \node (beta) at (0.4,0) {$\delta$};

    \draw[->] (TK) edge [bend left=40] node [above] {$Tf$} (TA);
    \draw[->] (TK) edge node [below] {$Tg$} (TA);
    \draw[->] (TA) edge [bend left=40] (TK);
    \draw[->] (TA) edge node [above] {$Th$} (TB);

	\draw[->] (TpK) edge [bend left=40] node [above] {$T^\prime f$} (TpA);
    \draw[->] (TpK) edge node [below] {$T^\prime g$} (TpA);
    \draw[->] (TpA) edge [bend left=40] (TpK);
    \draw[->] (TpA) edge node [below] {$T^\prime h$} (TpB);

	\draw[->] (TK) edge [bend right=30] node [left] {$k$} (K);
	\draw[->>] (TK) edge node [right] {$e_K$} (TpK);
	\draw[->] (TpK) edge [dotted] node [right] {$\exists! k'$} (K);
	%\draw[->] (TA)  .. controls (1.5,3.5) and (1.5,0.5) .. (A);
%	\draw[->] (TA) edge [bend left=30] node [right] {$a^\prime$} (A);
	\draw[->>] (TA) edge node [left] {$e_A$} (TpA);
	\draw[->] (TpA) edge node [left] {$a'$} (A);
	\draw[->] (TB) edge [bend left=30] node [right] {$b$} (B);
	\draw[->>] (TB) edge node [left] {$e_B$} (TpB);
	\draw[->] (TpB) edge [dotted] node [left] {$\exists! b'$} (B);

    \draw[->] (K) edge [bend left=40] node [above] {$f$} (A);
    \draw[->] (K) edge node [below] {$g$} (A);
    \draw[->] (A) edge [bend left=40] (K);
    \draw[->] (A) edge node [below] {$h$} (B);
\end{tikzpicture}
$$
there is a morphism $k'$ defined by the universal property of the kernel $K$, and by uniqueness, $k = k' \cdot e_K$ holds. Thus $(K,k')$ is a $T^\prime$-algebra, since $(K,k)$ is a $T$-algebra.
The existence of $k'$ in turn induces a morphism $b': T^\prime B \to B$ by the universal property of the coequifier $T^\prime h$. Moreover, $(B,b')$ is a $T^\prime$-algebra, as it is the coequifier of
$$
\begin{tikzpicture}
    \node (E) at (-1,2) {$T^\prime K$};
    \node (V) at (1,2) {$T^\prime A$};
    \node[rotate=-90] (L) at (-0.15,2) {$\Rightarrow$};
    \node[rotate=-90] (R) at (0.15,2) {$\Rightarrow$};
%    \node (alpha) at (-0.4,0) {$\gamma$};
%    \node (beta) at (0.4,0) {$\delta$};
    \draw[->] (E) edge [bend left=30] node [above] {$T^\prime f$} (V);
    \draw[->] (E) edge [bend right=30] node [below] {$T^\prime g$} (V);

    \node (E1) at (-1,0) {$K$};
    \node (V1) at (1,0) {$A$};
    \node[rotate=-90] (L1) at (-0.15,0) {$\Rightarrow$};
    \node[rotate=-90] (R1) at (0.15,0) {$\Rightarrow$};
%    \node (alpha) at (-0.4,0) {$\gamma$};
%    \node (beta) at (0.4,0) {$\delta$};
    \draw[->] (E1) edge [bend left=30] node [above] {$f$} (V1);
    \draw[->] (E1) edge [bend right=30] node [below] {$g$} (V1);
    
    \draw[->] (E) edge node [left] {$k'$} (E1);
    \draw[->] (V) edge node [right] {$a'$} (V1);
\end{tikzpicture}
$$
by creation of coequifiers of reflexive pairs (see Remark~\ref{rem:creation}). So $\Alg(T^\prime)$ is closed in $\Alg(T)$ under quotient algebras.

\medskip

The second part of the proof is the implication (2) $\Rightarrow$ (1). Given a strongly finitary 2-monad $T$ and a full replete subcategory
$$
J: \A \to \Alg(T)
%\xymatrix{
%\A \ar[r] & \Alg(T)
%}
$$
of $\Alg(T)$ that is closed under sifted colimits, 2-products, quotient algebras and subalgebras, we need to find a strongly finitary 2-monad $T^\prime $ such that there is a monad morphism $T \twoheadrightarrow T^\prime $ and the comparison $\A \to \Alg(T^\prime )$ is an equivalence.

We will proceed as follows:
\begin{enumerate}[(a)]
\item We will form an \emph{ordinary} left adjoint to $J$ by using Freyd's Adjoint Functor Theorem~\cite{maclane:cwm}.
\item We will show that the ordinary adjunction is enriched in $\Cat$ by using Proposition~\ref{prop:2-dim-gaft}.
\item We will construct the monad morphism $T \twoheadrightarrow T^\prime $ from the above adjunction and show the (enriched) equivalence $\A \simeq \Alg(T^\prime )$.
\end{enumerate}

Ad (a): We will show that $\A$ has and $J$ preserves ordinary limits. Since $J$ is fully faithful, it suffices to prove that $\A$ is closed in $\Alg(T)$ under ordinary limits. By assumption, $\A$ is closed in $\Alg(T)$ under 2-products. It is therefore closed under ordinary products as well, since 2-products and ordinary products coincide in $\Cat$. We need to show that it is closed also under equalisers. To this end, consider an equaliser diagram
$$
\xymatrix{
(A,a) \ar@{ >->}[r]
&
JX \ar@/^/[r]^{Js}
\ar@/_/[r]_{Jt}
%\ar@{}[r]|{\alpha}
&
JY
}
$$
in $\Alg(T)$. Equalisers in $\Alg(T)$ are computed on the level of underlying categories, which implies that $A \rightarrowtail UJX$ is faithful. Thus $(A,a)$ is a subalgebra of $JX$. Since the 2-category $\A$ is closed under subalgebras in $\Alg(T)$, we proved that it is closed under equalisers as well.

To establish the existence of a left adjoint for $J$, we now only need to find a solution set for every object $(A,a)$ of $\Alg(T)$. We claim that the solution set is the set $\{ h_i: (A,a) \twoheadrightarrow JX_i \mid i \in I \}$ of all the (representatives of the) quotients of $(A,a)$. This is indeed a set due to the nature of b.o.~fullness, recall Remark~\ref{rem:cowelpoweredness}. Given any morphism $h: (A,a) \to JX$, we can factorise it to obtain a triangle
$$
\xymatrix{
& (B,b) \ar@{ >->}[dd] \\
(A,a) \ar@{->>}[ru] \ar[rd]_{h} & \\
& JY
}
$$
and moreover, since $(B,b)$ is a subalgebra of $JY$, we have that $(B,b) \cong JX$ holds for some $X$ from $\A$, and the solution set condition is satisfied. The unit of the adjunction is constructed as follows: we take the product $\prod_{i \in I} JX_i$ of all the codomains of the quotients in the solution set, and factorise the mediating morphism $(h_i): (A,a) \to \prod_{i \in I} JX_i$ as in the following diagram.
$$
\xymatrix{
& JL(A,a) \ar@{ >->}[dd] \\
(A,a) \ar@{->>}[ru]^{\eta_{(A,a)}} \ar[rd]_{(h_i)} & \\
& \prod_{i \in I} JX_i
}
$$
Note that $\eta_{(A,a)}$ is b.o.~full for every algebra $(A,a)$.

Ad (b): We denote the adjunction from (a) by $L \dashv J$. To prove that this adjunction is enriched, we will use the argument contained in Remark~\ref{rem:2-dim-aft-usage}. Consider the solution set $\{ \eta_{(A,a)} \}$ for $(A,a)$. Since $\eta_{(A,a)}$ is b.o.~full for every $(A,a)$, it is the coequifier of its kernel.  The 2-dimensional universal property of the coequifier together with the fact that $J$ is fully faithful state that $\eta_{(A,a)}$ indeed satisfies the 2-dimensional solution set condition of Proposition~\ref{prop:2-dim-gaft} for every $(A,a)$. 

Ad (c): We can now define the 2-monad $T^\prime$ and the monad morphism $\phi: T \to T^\prime$ for which we will show the (enriched) equivalence $\A \simeq \Alg(T^\prime)$. Let us first settle the notation and write $(L \dashv J, \eta, \eps)$ for the adjunction $L \dashv J: \A \to \Alg(T)$, denote by $(F^T,U^T,\eta^T,\eps^T)$ the adjunction $F^T \dashv U^T: \Alg(T) \to \Cat$, and let $\mu^T : TT \to T$ be the multiplication of the 2-monad $T$.

This allows us to define the 2-functor $T^\prime  := U^T JL F^T$ which is the underlying endofunctor of the 2-monad $(T^\prime, \eta^{T^\prime}, \mu^{T^\prime})$ with the unit $\eta^{T^\prime}$ and the composition $\mu^{T^\prime}$ defined by the assignments
$$
\eta^{T^\prime} := U^T \eta F^T \cdot \eta^T, \qquad \mu^{T^\prime} := U^T J \eps L F^T \cdot U^T J L \eps^T J L F^T.
$$
Then there is a corresponding monad morphism $\phi = U^T\eta F^T: T \twoheadrightarrow T^\prime $. The proof that $\phi$ is indeed a monad morphism is standard and proceeds exactly as in the non-enriched case. Moreover, $\phi$ is a quotient, since
\begin{enumerate}
\item $\eta_A$ is a quotient for each algebra $(A,a)$, and
\item $U^T$ preserves quotients since $T$ does.
\end{enumerate}

Let us denote by
$$
\begin{tikzcd}
\A \ar[dotted]{rr}{K} \drar[swap]{U^T J} & & \Alg(T^\prime) \dlar{U^{T^\prime}} \\
& \Cat &
\end{tikzcd}
$$
the ordinary comparison functor. We will apply the ordinary Beck's theorem to infer that $K$ is an ordinary equivalence. Since $\A$ has and $U^T J$ preserves sifted colimits, $\A$ has and $U^T J$ preserves coequalisers of reflexive pairs. Moreover, since $U^T$ reflects isomorphisms and $J$ is fully faithful, the composite functor $U^T J$ also reflects isomorphisms. Therefore $K: \A \to \Alg(T^\prime)$ is indeed an ordinary equivalence functor.

We will now show that on objects, the inclusion $J: \A \to \Alg(T)$ factorises, up to isomorphism, as in the following triangle:
$$
\xymatrix{
& \Alg(T^\prime) \ar[dd]^{\Alg(\phi)} \\
\A \ar[ru]^{K} \ar[rd]_{J} & \\
& \Alg(T).
}
$$
Indeed, for any object $A$ of $\A$ the equality
$$
KA = (U^TJA, U^T J \eps_A \cdot U^T J L \eps^T_{JA})
$$
holds. The algebra $KA$ gets mapped by the functor $\Alg(\phi)$ to an algebra with a structure map
\begin{align*}
U^T J \eps_A \cdot U^T J L \eps^T_{JA} \cdot \phi_{U^T J A} & = U^T J \eps_A \cdot U^T J L \eps^T_{JA} \cdot U^T \eta_{F^T U^T J A} \\
& = U^T J \eps_A \cdot U^T \eta_{JA} \cdot U^T \eps^T_{JA} \\
& = U^T \eps^T_{JA},
\end{align*}
where the first equality holds by the definition of $\phi$, the second one follows from naturality of $\eta$, and the third one comes from the triangle identity of $L \dashv J$. But $(U^TJA, U^T \eps^T_{JA})$ is isomorphic to $JA$, as $(U^TJA, U^T \eps^T_{JA})$ is the image of $JA$ under the trivial comparison functor
$$
I: \Alg(T) \to \Alg(T).
$$
Both $J$ and $\Alg(\phi)$ are fully faithful in $\Cat$-enriched sense: the 2-functor $J$ is such by assumption and $\Alg(\phi)$ was proved to be fully faithful for a quotient monad morphism $\phi$ in the first part of the proof. We can conclude that the ordinary equivalence $K: \A \to \Alg(T^\prime)$ is enriched in $\Cat$, thus finishing the proof.
\end{proof}

\begin{remark}
A point that needs to be discussed is that we demand closure of $\A$ under sifted colimits in $\Alg(T)$ in the characterisation of equationally defined subcategories of $\Alg(T)$. It is true that in the original Birkhoff theorem there is no need to demand closure under any class of colimits whatsoever. However, even in the ordinary case of $\V = \Set$, closure under filtered (or directed) colimits is essential in the case of many-sorted universal algebra, see~\cite{arv-many-sorted-hsp}. In the case of $\V = \Cat$, at least the requirement for closure under filtered colimits is arguably expectable. The reason why our version of the Birkhoff theorem asks for an even stronger closure property, i.e., closure under \emph{sifted} colimits is the following. While finitary and strongly finitary monads on $\Set$ coincide (every finitary monad is strongly finitary), this is not the case for 2-monads on $\Cat$: a finitary 2-monad need not be strongly finitary. For example, the 2-monad $T$ that gives rise to the 2-category $\Alg(T)$ of categories $\C$ equipped with one "arrow-ary" operation $\C^\Two \to \C$ is finitary, but $T$ fails to preserve sifted colimits in general. Since we are dealing with \emph{strongly finitary} 2-monads on $\Cat$, the corresponding closure property is the closure under \emph{sifted} colimits.
\end{remark}

In the ordinary setting, full algebraic subcategories induced by a quotient monad morphism can be characterised as a special kind of orthogonal subcategories. Without substantial changes to the reasoning, the same characterisation can be obtained for the case of $\V = \Cat$, as it is shown in the following remark.

\begin{remark}
Given a 2-category $\X$ and a set $S = \{ f_i: X_i \to Y_i \mid i \in I \}$ of morphisms of $\X$, we will denote by $S^\perp$ the full subcategory $J: \Y \to \X$ spanned by the objects $Y$ that are orthogonal to all morphisms in $S$. The full equationally defined subcategories
$$
J: \A \to \Alg(T)
$$
of the 2-category $\Alg(T)$ of algebras for a strongly finitary 2-monad $T$ are precisely the orthogonal subcategories of $\Alg(T)$ of the form
$$
\A = \{ f: F^T n \twoheadrightarrow (C,c) \mid f \in I \}^\perp = I^\perp
$$
for some set $I$ of quotient morphisms in $\Alg(T)$. Moreover, each morphism in $I$ has as its domain a free algebra on a finite discrete category.

To see that one direction of this statement holds, observe that $\A$ is closed under subobjects in $\Alg(T)$: Given an algebra $(B,b)$ in $\A$ and its subalgebra $(A,a)$, we have for any $g: F^T n \to (A,a)$ a situation
$$
\begin{tikzcd}
F^T n \rar[->>]{f} \dar[swap]{g} & (C,c) \dar[dotted] \dlar[swap, dotted] \\
(A,a) \rar[>->, swap]{g} & (B,b)
\end{tikzcd}
$$
where the morphism $(C,c) \to (B,b)$ exists since $f \perp (B,b)$, and the diagonal exists by the diagonal property of the factorisation system. Closure of $\A$ under 2-products in $\Alg(T)$ follows easily from the universal property of 2-products. To show that $\A$ is closed under quotients in $\Alg(T)$, observe first that $\Alg(T)(F^T n, {-})$ preserves quotient maps. Indeed, we have that
$$
\Alg(T)(F^T n, {-}) \cong \Cat(n, U^T{-})
$$
holds, and both $U^T$ and $\Cat(n,{-})$ are easily seen to preserve quotient maps. This is equivalently saying that $F^T n$ is projective with respect to quotients. Then, given an algebra $(A,a)$, a quotient $h: (A,a) \twoheadrightarrow (B,b)$ and a morphism $g: F^T n \to (B,b)$, there exists a (not necessarily unique) factorisation
$$
\begin{tikzcd}
F^T n \dar[dotted, swap]{p} \drar{g} &  \\
(A,a) \rar[swap, ->>]{h} & (B,b)
\end{tikzcd}
$$
by projectivity of $F^T n$. Since $(A,a)$ is orthogonal to $f$, we obtain a triangle
$$
\begin{tikzcd}
F^T n \rar{f} \dar[swap]{p} & (C,c) \dlar[dotted]{o} \\
(A,a) &
\end{tikzcd}
$$
The composite $h \cdot o$ then proves that $f \perp (B,b)$. Indeed, given any other factorisation $g = i \cdot f$, it holds that $i = h \cdot o$ since $f$ is epi. The closure of $\A$ under sifted colimits follows from observing that $\Alg(T)(F^T n, {-})$ preserves sited colimits. This is the case since both $\Cat(n,{-})$ and $U^T$ preserve sifted colimits.

In the other direction, recall that reflective subcategories are \emph{always} orthogonality classes. In our case we have that
$$
\A = \{ \eta_{(A,a)}: (A,a) \twoheadrightarrow JL(A,a) \mid (A,a) \in \Alg(T) \}^\perp.
$$
We need to take a subset of the above class of morphisms such that the codomain of each morphism is a free algebra on a finite discrete category. For this, we use that every algebra $(A,a)$ is a sifted colimit of free algebras on finite discrete categories.
Now as the orthogonal lifting property is closed under colimits in the category of arrows, we get that
$$
\{ \eta_{(A,a)}: (A,a) \twoheadrightarrow JL(A,a) \mid (A,a) \in \Alg(T) \}^\perp
$$
is equal to the subcategory
$$
\{ \eta_{F^T n}: F^T n \twoheadrightarrow JLF^T n \mid n \in \Cat, n \mbox{ finite discrete} \}^\perp,
$$
as we needed.
\end{remark}

\section{Concluding remarks}

In this section we first discuss possible directions for future work. Then we conclude by showing that the kernel-quotient systems $\F_\bo$ and $\F_\so$ are inadequate for obtaining any kind of a strong Birkhoff-type theorem.

\subsection*{Equational logic for the $\F_\bof$ factorisation system}

In classical universal algebra, it is known that there is an \emph{equational logic} that is sound and complete with respect to the notion of \emph{equational consequence}. See Section~3.2.4 of~\cite{wechler} for a nice treatment. An obvious question is whether there is an ``equational logic'' sound and complete with respect to the notion of equational consequence that comes from our definition of what equation is in the  2-dimensional context. Finding such a calculus is a problem for future work.

\subsection*{Other factorisation systems}

We will discuss some problems concerning the factorisation systems (bijective on objects, fully faithful) and (surjective on objects, injective on objects and fully faithful) on $\Cat$.

We can see immediately that the situation is very different in the case of the (b.o., f.f.) factorisation system when compared to the (b.o.~full, faithful) case. Given a monad morphism $e: T \to T'$ with $e_X : TX \to T' X$ being b.o.\ for all categories $X$, the algebraic functor
$$
\Alg(e): \Alg(T') \to \Alg(T)
$$
need not be fully faithful in the 2-dimensional sense. This calls for a different approach to stating and proving a Birkhoff-style theorem. Indeed, trying to mimic the case of the (b.~.o.~full, faithful) factorisation system breaks down at the very beginning: we will not be able to characterise ``equational'' subcategories by their closure properties, as the subcategories need not be full. Even more goes wrong: not every b.o.\ functor is epimorphic in $\Cat$, and $\Cat$ is not cowellpowered with respect to b.o.~quotients.

Recall from Remark~\ref{rem:alg-meaning-ptwise} that a b.o.~full quotient $e: T \twoheadrightarrow T^\prime$ has the components $e_n: Tn \twoheadrightarrow T^\prime n$ pointwise b.o.~full in $\Cat$. Algebraically, this specifies new equations that have to hold between morphisms in a $\T^\prime$-algebra. However, no such algebraic meaning can be given in the case of a b.o.\ quotient $e: T \twoheadrightarrow T^\prime$. This is because the components $e_n: Tn \twoheadrightarrow T^\prime n$ are pointwise \emph{only b.o.} in $\Cat$. Thus $T^\prime$ as a monad may contain new 2-dimensional algebraic information, and in this context it does not make sense to talk about $\Alg(T^\prime)$ as of an ``equational'' subcategory of $\Alg(T)$.

The same remarks remain true when considering the (s.o., i.o.f.f.) factorisation system. Thus both the (b.o.~full, faithful) and (s.o., i.o.f.f.) factorisation systems would allow only for a very weak and generic Birkhoff theorem.

\end{document}